\newtheorem{theorem}{Theorem}[section]
\newtheorem{remark}[theorem]{Remark}
\newtheorem{claim}[theorem]{Claim}
\newtheorem*{conjecture*}{Conjecture}
\newtheorem*{acknowledgement}{Acknowledgements}
\newtheorem{lemma}[theorem]{Lemma}
\newtheorem{prop}[theorem]{Proposition}
\newcommand{\TT}{\mathbb{T}}
\newcommand{\WW}{\mathbb{W}}
\title[Weighted decoupling and Falconer distance problem]{Weighted refined decoupling estimates and application to Falconer distance set problem}
\author[X. Du, Y. Ou, K. Ren and R. Zhang]{Xiumin Du, Yumeng Ou, Kevin Ren, and Ruixiang Zhang}
\newcommand{\R}{\mathbb{R}}
\newcommand{\lesim}{\lesssim}
\newcommand{\eps}{\varepsilon}
\newcommand{\norm}[1]{\| #1 \|}
\DeclareMathOperator{\dist}{dist}
\newcommand{\cE}{\mathcal{E}}
\newcommand{\al}{\alpha}
\newcommand{\ka}{\kappa}
\newcommand{\La}{\Lambda}
\newcommand{\Om}{\Omega}
\newcommand{\ZZ}{\mathbb{Z}}
\newcommand{\hichi}{\raisebox{0.7ex}{\(\chi\)}}
\begin{document}

\maketitle

\begin{abstract}
We prove some weighted refined decoupling estimates. As an application, we give an alternative proof of the following result on Falconer's distance set problem by the authors in a companion work: if a compact set $E\subset \mathbb{R}^d$ has Hausdorff dimension larger than $\frac{d}{2}+\frac{1}{4}-\frac{1}{8d+4}$, where $d\geq 4$, then there is a point $x\in E$ such that the pinned distance set $\Delta_x(E)$ has positive Lebesgue measure. Aside from this application, the weighted refined decoupling estimates may be of independent interest.  
\end{abstract}

\section{Introduction}
In this paper, we prove some weighted refined decoupling estimates (see Theorems \ref{thm-RD-lp-0} and \ref{thm-RD-lp}) and discuss their application to Falconer's distance set problem.

\subsection{Weighted refined decoupling estimates}
Here is the setup for refined decoupling estimates.

Suppose that $S \subset \mathbb{R}^d$ is a compact and strictly convex $C^2$ hypersurface with Gaussian curvature $\sim 1$. 

For any $\epsilon>0$, suppose there exists $0<\beta\ll \epsilon$ satisfying the following.   Suppose that the $R^{-1}$-neighborhood of $S$ is partitioned into $R^{-1/2} \times ... \times R^{-1/2} \times R^{-1}$ blocks $\theta$.  For each $\theta$, let $\mathbb{T}_\theta$ be a set of finitely overlapping tubes of dimensions $R^{1/2 + \beta} \times \cdots \times R^{1/2+\beta}\times R$ with long axis perpendicular to $\theta$, let $G(\theta)\in \mathbb{S}^{d-1}$ denote this direction, and let $\mathbb{T} = \cup_\theta \mathbb{T}_\theta$. Each $T\in \mathbb T$ belongs to $\mathbb{T}_{\theta}$ for a single $\theta$, and we let $\theta(T)$ denote
this $\theta$. We say that $f$ is microlocalized to $(T,\theta(T))$ if $f$ is essentially supported in
$2T$ and $\hat{f}$ is essentially supported in $2\theta(T)$. 

Here is our first main result on weighted refined decoupling estimates.

\begin{theorem} \label{thm-RD-lp-0}
 Suppose that $f=\sum_{T\in \mathbb{W}}f_T$, where $\mathbb W\subset \mathbb T$ and each $f_T$ is microlocalized to $(T,\theta(T))$. Let $Y$ be a union of $R^{1/2}$-cubes in $B^d_R$ each of which intersects at most $M$ tubes $T\in \mathbb W$. Denote $p_d=\frac{2(d+1)}{d-1}$. Then the following refined decoupling inequalities hold.

\noindent (a) Let $p\geq 2$. Then
\begin{equation}\label{eq-RD-lp}
    \|f\|_{L^p(Y)} \lesssim_\epsilon R^{\gamma_d(p)+\epsilon} M^{\frac 12 -\frac 1p} \left(\sum_{T\in \mathbb W}\|f_T\|_{L^p}^p\right)^{\frac 1p},
\end{equation}
where $\gamma_d(p)=0$ when $2\leq p\leq p_d$, and $\gamma_d(p)=\frac{d-1}{4}-\frac{d+1}{2p}$ when $p\geq p_d$.

\noindent (b) Let $p\leq p_d$, $\alpha\leq d$. Let $H:Y\to [0,1]$ be a function satisfying that $\int_Q H(x)\,dx \lesssim R^{\alpha/2}$ for any $R^{1/2}$-cube $Q$ in $Y$. Then
\begin{equation}\label{eq-RD-lp-R-alpha-0}
    \|f\|_{L^p(Y;Hdx)} \lesssim_\epsilon R^{\frac 12 (\alpha -d)(\frac 1p -\frac{1}{p_d})+\epsilon} M^{\frac 12 -\frac 1p} \left(\sum_{T\in \mathbb W}\|f_T\|_{L^p}^p\right)^{\frac 1p}.
\end{equation}

\end{theorem}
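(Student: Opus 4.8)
The heart of the matter is part (b); part (a) will fall out at the end. The plan for (b) is to absorb the hypothesis on $H$ into a power of $R$ by a cube‑by‑cube H\"older inequality, reducing everything to a \emph{weight‑free, mixed‑norm} refined decoupling estimate. On each $R^{1/2}$‑cube $Q\subset Y$, H\"older with exponents $\tfrac{p_d}{p}$ and its conjugate, together with $H\le 1$ and $\int_Q H\lesssim R^{\alpha/2}$, gives
\[
\int_Q |f|^{p}H\,dx\ \le\ \Big(\int_Q |f|^{p_d}\Big)^{p/p_d}\Big(\int_Q H\Big)^{1-p/p_d}\ \lesssim\ R^{\frac\alpha2(1-p/p_d)}\,\|f\|_{L^{p_d}(Q)}^{p}.
\]
Summing over $Q$, part (b) follows once one establishes the estimate
\[
\Big(\sum_{Q}\|f\|_{L^{p_d}(Q)}^{p}\Big)^{1/p}\ \lesssim_{\epsilon}\ R^{\epsilon}\,R^{-\frac d2\left(\frac1p-\frac1{p_d}\right)}\,M^{\frac12-\frac1p}\Big(\sum_{T\in\mathbb{W}}\|f_T\|_{L^p}^{p}\Big)^{1/p},
\]
which we call $(\star)$; indeed, after summing and inserting $(\star)$ raised to the $p$‑th power one gets $\int_Y|f|^pH\lesssim_\epsilon R^{\frac{\alpha-d}2(1-p/p_d)+\epsilon}M^{\frac p2-1}\sum_T\|f_T\|_{L^p}^p$, and taking $p$‑th roots, using $\tfrac1p(1-p/p_d)=\tfrac1p-\tfrac1{p_d}$, is exactly \eqref{eq-RD-lp-R-alpha-0}.

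To prove $(\star)$ I would first run the dyadic pigeonholing standard in refined‑decoupling arguments: losing only a factor $R^{\epsilon}$, one may assume all wave packets $f_T$ have a common height $\sim h$ on their tubes and that every $R^{1/2}$‑cube $Q$ still in play meets $\sim\mu$ tubes of $\mathbb{W}$, for a single dyadic $1\le\mu\le M$. The key point is a \emph{lower} bound on $\|f\|_{L^{p_d}(Q)}$ for such a $Q$: since the $\widehat{f_\theta}$ sit on finitely overlapping $R^{-1/2}$‑caps, the $f_\theta$ are $L^2$‑orthogonal on $Q$ and each $|f_\theta|$ is essentially constant at scale $R^{1/2}$, so $\|f\|_{L^2(Q)}\gtrsim \mu^{1/2}h\,|Q|^{1/2}$, whence by H\"older on $Q$
\[
\|f\|_{L^{p_d}(Q)}\ \gtrsim\ |Q|^{\frac1{p_d}-\frac12}\|f\|_{L^2(Q)}\ \gtrsim\ \mu^{1/2}h\,|Q|^{1/p_d}.
\]
On the other hand, the standard \emph{unweighted} refined decoupling theorem at the critical exponent $p_d$ (equivalently, the $p=p_d$ case of part (a)), applied on the union of the surviving cubes, whose tube‑multiplicity is $\sim\mu$, yields $\sum_Q\|f\|_{L^{p_d}(Q)}^{p_d}=\|f\|_{L^{p_d}}^{p_d}\lesssim_\epsilon R^{\epsilon}\mu^{\frac{p_d}{2}-1}\sum_{T}\|f_T\|_{L^{p_d}}^{p_d}$. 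Writing $\|f\|_{L^{p_d}(Q)}^{p}=\|f\|_{L^{p_d}(Q)}^{p_d}\,\|f\|_{L^{p_d}(Q)}^{\,p-p_d}$ and, since $p-p_d\le 0$, bounding the last factor from above by $(\mu^{1/2}h|Q|^{1/p_d})^{p-p_d}$, then summing in $Q$, inserting the decoupling bound, and simplifying via $h^{p}|T|\sim\|f_T\|_{L^p}^{p}$, $\|f_T\|_{L^{p_d}}^{p_d}\sim h^{p_d}|T|$ and $|Q|^{(p-p_d)/p_d}=R^{-\frac d2(1-p/p_d)}$, the powers of $h$ cancel and the powers of $\mu$ combine to $\mu^{\frac{p_d}{2}-1+\frac{p-p_d}{2}}=\mu^{\frac p2-1}\le M^{\frac p2-1}$, which is $(\star)$ after re‑summing the $O(\log R)$ pigeonholed pieces. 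The same bookkeeping goes through verbatim when tubes are only partially ``lit'', with the number of tube–cube incidences playing the role of (number of tubes)$\times|T|$.

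It remains to record part (a). For $2\le p\le p_d$ it is the special case $\alpha=d$, $H\equiv 1$ of part (b) (and reduces to $\gamma_d(p)=0$ in \eqref{eq-RD-lp}). For $p\ge p_d$ one interpolates the $p=p_d$ case (constant $\lesssim_\epsilon R^{\epsilon}M^{1/2-1/p_d}$) against the trivial endpoint at $p=\infty$: every point of $Y$ lies in a cube meeting at most $M$ tubes, and since there are only $\lesssim R^{(d-1)/2}$ caps $\theta$ we have $M\lesssim R^{(d-1)/2}$, so $\|f\|_{L^\infty(Y)}\lesssim M\sup_T\|f_T\|_{L^\infty}\le R^{(d-1)/4}M^{1/2}\sup_T\|f_T\|_{L^\infty}$; complex interpolation reproduces the exponents $\gamma_d(p)=\tfrac{d-1}{4}-\tfrac{d+1}{2p}$ and the factor $M^{1/2-1/p}$.

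The step I expect to be the real obstacle is pinning down the $M$‑exponent in $(\star)$: a naive interpolation of the mixed norm between its $L^2$ and $L^{p_d}$ endpoints, or a naive per‑cube $L^\infty$ estimate — either route bypassing the curvature of $S$ — only produces a power of $M$ like $M^{1/2-1/p_d}$ or $M^{1-1/p_d}$, strictly worse than $M^{1/2-1/p}$ when $p<p_d$. One genuinely needs the honest pigeonholing together with the \emph{sharp} local lower bound $\|f\|_{L^{p_d}(Q)}\gtrsim\mu_Q^{1/2}h|Q|^{1/p_d}$, which is exactly what cancels the over‑counting and lets refined decoupling do the rest. Apart from this, the only non‑elementary ingredient is unweighted refined decoupling at $p_d$ (a consequence of the Bourgain–Demeter decoupling theorem for curved $C^2$ hypersurfaces); everything else is H\"older, $L^2$‑orthogonality, local constancy, and dyadic pigeonholing.
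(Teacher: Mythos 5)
Your treatment of part (b) follows the same high-level route as the paper's: H\"older to pass from the weighted $L^p$ norm to the unweighted $L^{p_d}$ norm, then invoke the known refined decoupling at the critical exponent $p=p_d$, using the pigeonholing to constant wave-packet height to convert $(\sum_T\|f_T\|_{L^{p_d}}^{p_d})^{1/p_d}$ back to $(\sum_T\|f_T\|_{L^p}^p)^{1/p}$. The difference is that you apply H\"older cube-by-cube and then extract the gain from a \emph{pointwise lower bound} $\|f\|_{L^{p_d}(Q)}\gtrsim\mu^{1/2}h|Q|^{1/p_d}$ on every surviving cube, while the paper applies H\"older globally on $Y$ and extracts the same gain from the elementary incidence count $NM\lessapprox WR^{1/2}$ (number of cubes $\times$ multiplicity $\lesssim$ tube–cube incidences), with no lower bounds required. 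This is where your argument has a genuine gap: the lower bound rests on local $L^2$-orthogonality of the caps together with local constancy of $|f_\theta|$, but the C\'ordoba/local-constancy mechanism only produces \emph{upper} bounds of the form $\int_Q|f|^2\lesssim\sum_\theta\int|f_\theta|^2 w_Q$; destructive interference among the $\mu$ caps present on a given $Q$ can make $\|f\|_{L^2(Q)}$ (and hence $\|f\|_{L^{p_d}(Q)}$) arbitrarily small. And because you raise this norm to the negative power $p-p_d$, the cubes where it is small are exactly the ones your bound cannot control. The fix is simple and amounts to the paper's argument in disguise: add one more dyadic pigeonholing so that $\|f\|_{L^{p_d}(Q)}\sim\textrm{const}$ over the surviving cubes (exactly as the paper does for part (a) in \eqref{Qconst}); then $\sum_Q\|f\|_{L^{p_d}(Q)}^p=N^{1-p/p_d}\|f\|_{L^{p_d}(Y)}^p$, and the incidence count $N\mu\lessapprox WR^{1/2}$ supplies the remaining power of $R$ and pulls the $M$-exponent down to $\tfrac12-\tfrac1p$. (Your ``step I expect to be the real obstacle'' worry is therefore misplaced: the global H\"older $+$ incidence route already gives $M^{1/2-1/p}$ with no interpolation.)

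For part (a), your derivation of the supercritical range $p\ge p_d$ — $L^p$-interpolation of $\|f\|_{L^p(Y)}$ between $p_d$ and $\infty$, the trivial bound $\|f\|_{L^\infty(Y)}\lesssim M\sup_T\|f_T\|_{L^\infty}$, and the observation that $M\lesssim R^{(d-1)/2+O(\beta)}$ since an $R^{1/2}$-cube meets $O(R^{(d-1)\beta})$ tubes per direction — is a valid and reasonably clean alternative to the paper's approach, which instead runs a single induction on scales for all $p\ge2$ by feeding the appropriate $R^{\gamma_d(p)}$ loss from Bourgain--Demeter into the parabolic-rescaling step. However, your argument takes the $2\le p\le p_d$ case (in particular the crucial $p=p_d$ endpoint) entirely on faith from \cite{guth2020falconer}, and your (b)$\Rightarrow$(a) remark for that range is circular since your proof of (b) already invokes (a) at $p=p_d$. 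The paper, by contrast, actually proves the critical case (indeed all of (a)) by the wave-packet/pigeonhole/Bourgain--Demeter/rescaling induction; if the goal is a self-contained proof of the theorem as stated, that induction is the step you would still need to supply.
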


The study of inequalities of this type originated from the work \cite{DGL}, where linear and bilinear refined Strichartz estimates are established and applied to resolve Carleson's pointwise convergence problem of Schr\"odinger solutions in dimension $2+1$. Later, multilinear refined Strichartz estimates are proved in \cite{DGLZ} and play an important role in the final resolution of the pointwise convergence problem in all dimensions \cite{du2019sharp}. Refined decoupling inequalities are stronger versions of linear refined Strichartz estimates; they first appeared in \cite{guth2020falconer} and played a key role in recent study of the Falconer distance set problem \cite{guth2020falconer, du2021improved}. See \cite{guth2020falconer} for a comparison between Bourgain--Demeter's decoupling theorem \cite{BDdecoupling} and the refined decoupling theorem.

In the case that $2\leq p\leq p_d$, Theorem \ref{thm-RD-lp-0}(a) and the refined decoupling theorem in
\cite[Theorem 4.2]{guth2020falconer} are equivalent. The main novelty is Theorem \ref{thm-RD-lp-0}(b), which says that if we take the weighted $L^p$-norm $\|f\|_{L^p(Y; Hdx)}$ on the left-hand side of the decoupling inequality, where $H$ is a weight that satisfies the ball condition with exponent $\alpha$ at scale $R^{1/2}$, then we have an extra gain $R^{\frac 12 (\alpha -d)(\frac 1p -\frac{1}{p_d})}$ when $\alpha<d$ and $p<p_d$.

We can also extend Theorem \ref{thm-RD-lp-0} to intermediate dimensions. Let $2\leq m\leq d$. Denote $p_m=\frac{2(m+1)}{m-1}$. Let $R^{-1/2}\leq r\leq 1$, and $f=\sum_{T\in \mathbb{W}} f_T$, where each $f_T$ is microlocalized to $(T, \theta(T))$. We say that $f$ has \textbf{$(r,m)$-concentrated frequencies} if there is some $m$-dimensional subspace $V$ such that 
$$
\textrm{Angle} (G(\theta(T)), V) \leq r, \quad \forall T\in \mathbb{W}\,.
$$
Obviously, from the definition, all $f$ trivially has $(1,m)$-concentrated frequencies and also $(r,d)$-concentrated frequencies for any $r>0$.  

\begin{theorem} \label{thm-RD-lp}
 Suppose that $f=\sum_{T\in \mathbb{W}}f_T$, where $\mathbb W\subset \mathbb T$ and each $f_T$ is microlocalized to $(T,\theta(T))$. Let $Y$ be a union of $R^{1/2}$-cubes in $B^d_R$ each of which intersects at most $M$ tubes $T\in \mathbb W$. Then the following refined decoupling inequalities hold.

\noindent (a) Let $2\leq m\leq d$, $p\geq 2$. Suppose that $f$ has $(R^{-1/2},m)$-concentrated frequencies. Then
\begin{equation}\label{eq-RD-lp-R}
    \|f\|_{L^p(Y)} \lesssim_\epsilon R^{\gamma_m(p)+\epsilon} M^{\frac 12 -\frac 1p} \left(\sum_{T\in \mathbb W}\|f_T\|_{L^p}^p\right)^{\frac 1p},
\end{equation}
where $\gamma_m(p)=0$ when $2\leq p\leq p_m$, and $\gamma_m(p)=\frac{m-1}{4}-\frac{m+1}{2p}$ when $p\geq p_m$.

\noindent (b) Let $2\leq m\leq d$, $p\leq p_m$, $\alpha\leq d$. Suppose that $f$ has $(R^{-1/2},m)$-concentrated frequencies. Let $H:Y\to [0,1]$ be a function satisfying that $\int_Q H(x)\,dx \lesssim R^{\alpha/2}$ for any $R^{1/2}$-cube $Q$ in $Y$. Then
\begin{equation}\label{eq-RD-lp-R-alpha}
    \|f\|_{L^p(Y;Hdx)} \lesssim_\epsilon R^{\frac 12 (\alpha -d)(\frac 1p -\frac{1}{p_m})+\epsilon} M^{\frac 12 -\frac 1p} \left(\sum_{T\in \mathbb W}\|f_T\|_{L^p}^p\right)^{\frac 1p}.
\end{equation}

\noindent (c) Let $2\leq m\leq d$, $p_d\leq p\leq p_m$, $\alpha\leq d$. Suppose that $f$ has $(r,m)$-concentrated frequencies, where $R^{-1/2}\leq r\leq 1$. Let $H:Y\to [0,1]$ be a function satisfying that $\int_{Q'} H(x)\,dx \lesssim (\frac 1r)^\alpha$ for any $\frac 1r$-cube $Q'$ in $Y$. Then
\begin{equation}\label{eq-RD-lp-r-alpha}
    \|f\|_{L^p(Y;Hdx)} \lesssim_\epsilon R^\epsilon r^{(d-\alpha)(\frac 1p -\frac{1}{p_m})} (r^2 R)^{\frac{d-1}{4}-\frac{d+1}{2p}} M^{\frac 12 -\frac 1p} \left(\sum_{T\in \mathbb W}\|f_T\|_{L^p}^p\right)^{\frac 1p}.
\end{equation}
\end{theorem}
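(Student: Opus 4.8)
The plan is to deduce all three parts from Theorem~\ref{thm-RD-lp-0}. Parts (a) and (b), which concern the $(R^{-1/2},m)$-concentrated case, will be obtained by a \emph{lifting} argument identifying $f$ with (essentially) a function on $\R^m$ to which the $m$-dimensional instance of Theorem~\ref{thm-RD-lp-0} applies; part (c) will then follow from part (b) together with Theorem~\ref{thm-RD-lp-0}(a) by a two-scale parabolic rescaling. Throughout, the losses coming from the mismatch between the tube width $R^{1/2+\be}$ and the cube scale $R^{1/2}$ are at most $R^{O(\be)}\le R^\e$ and will be absorbed silently.

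For (a) and (b): after a rotation we may assume the $m$-plane $V$ contains the $e_d$-axis and $m-1$ of the tangent coordinate directions, so that $V^\perp$ is spanned by $d-m$ directions in which $S$ is flat at scale $R^{-1}$. The $(R^{-1/2},m)$-concentration then forces $\hatf$ to be supported in a set of the form (an $R^{-1}$-neighborhood of an $(m-1)$-dimensional piece $\Ga\subset\R^m$ of a curved hypersurface) $\times$ (a single $R^{-1/2}$-ball in the $V^\perp$ variables). Writing $x=(x',x'')\in\R^m\times\R^{d-m}$, this makes $f$ locally constant at scale $R^{1/2}$ in $x''$, so up to Schwartz tails $\|f\|_{L^p(Y;H\,dx)}^p\lesim R^{(d-m)/2}\int |f(x',x'')|^p\,\wt H(x',x'')\,dx'dx''$ for a weight $\wt H$ with values in $[0,1]$ satisfying, in the $x'$ variable and uniformly in $x''$, the ball condition $\int_{Q'}\wt H\lesim R^{\al'/2}$ over $R^{1/2}$-cubes $Q'\subset\R^m$, with effective exponent $\al'=\al-(d-m)\le m$. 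Fixing $x''$ and applying the $m$-dimensional form of Theorem~\ref{thm-RD-lp-0}(b) to $g:=f(\cdot,x'')$ — which carries the required wave-packet structure on $\R^m$ with the same cube-multiplicity $M$ — and then integrating in $x''$, the factors $R^{\pm(d-m)/(2p)}$ cancel and we obtain \eqref{eq-RD-lp-R-alpha}; using Theorem~\ref{thm-RD-lp-0}(a) in place of (b) gives \eqref{eq-RD-lp-R} with $\ga_m$ replacing $\ga_d$.

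For (c): set $\rho:=r^{-2}$, so $1\le\rho\le R$ since $R^{-1/2}\le r\le 1$. Partition $B^d_R$ into $\rho$-balls $B$ and group the caps $\theta$ into caps $\tau$ of diameter $\sim r$; on each $B$ write $f=\sum_\tau\sum_{T'}f_{T'}$, where $T'$ runs over $\rho^{1/2+\be}\times\cdots\times\rho$ tubes in direction $G(\tau)$, which is a legitimate input for refined decoupling at scale $\rho$ in $\R^d$ with $(\rho^{-1/2},m)$-concentrated frequencies. Applying part (b) at scale $\rho$ — permissible because $\al\le d$, $p\le p_m$, and the weight $H$ satisfies the ball condition at scale $\rho^{1/2}=1/r$ with exponent $\al$ as hypothesized — produces the factor $\rho^{\frac12(\al-d)(\frac1p-\frac1{p_m})}=r^{(d-\al)(\frac1p-\frac1{p_m})}$ together with a cube-multiplicity $M_B$. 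Next, for each $\tau$ apply the anisotropic parabolic rescaling sending $\tau$ to a unit cap; then $f_\tau$ becomes a function with frequencies in the $(r^2R)^{-1}$-neighborhood of a unit cap, living on a box of dimensions $\sim rR\times\cdots\times rR\times r^2R$, and after tiling that box by $B_{r^2R}$-balls and applying Theorem~\ref{thm-RD-lp-0}(a) at scale $r^2R$ (permissible since $p\ge p_d$, so $\ga_d(p)=\frac{d-1}{4}-\frac{d+1}{2p}$) we gain the factor $(r^2R)^{\ga_d(p)}$ and a cube-multiplicity $\wt M_\tau$. Summing these estimates over the disjoint balls $B$ and over $\tau$, and reassembling $\sum_{T}\|f_T\|_p^p$ using the essential disjointness of the tubes $T'$ within a fixed $\tau$ and of the tubes $T$ within a fixed $\rho$-ball, yields \eqref{eq-RD-lp-r-alpha} provided $\sup_B M_B\cdot\sup_\tau\wt M_\tau\lesim_\e R^\e M$. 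Note that this uses part (b) (already established) at a smaller scale, so there is no circularity. One checks in the ``uniform'' model case that $M_B\wt M_\tau\sim r^{-(m-1)}\cdot(rR^{1/2})^{d-1}=r^{d-m}R^{(d-1)/2}\sim M$, confirming the bound.

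The main obstacle is the two-scale bookkeeping in (c): verifying the incidence bound $\sup_BM_B\cdot\sup_\tau\wt M_\tau\lesim_\e R^\e M$, and tracking how the anisotropic rescaling transforms the $R^{1/2}$-cube condition, the ball-shaped spatial domain (a $\rho$-ball rescales to a box, which must be tiled before invoking Theorem~\ref{thm-RD-lp-0}), and the $\ell^p$ sums of wave packets together with their Jacobian factors. The lifting step in (a)--(b) requires only routine care with Schwartz tails and with the $R^{1/2+\be}$ versus $R^{1/2}$ discrepancy.
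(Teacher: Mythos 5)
For parts (a) and (b), your lifting argument --- identifying $f$ with a function on $\R^m$ and applying the $m$-dimensional instance of Theorem~\ref{thm-RD-lp-0} slice-by-slice in $x''$ --- takes a genuinely different route from the paper's. The paper proves (a) by re-running the induction-on-scales proof of Theorem~\ref{thm-RD-lp-0}(a), replacing the Bourgain--Demeter step at scale $R^{1/2}$ with its $m$-dimensional version (justified from the $(R^{-1/2},m)$-concentration hypothesis via the lemmas it cites), and then deduces (b) from (a) at the \emph{single} exponent $p=p_m$ by a short H\"older argument, using the ball condition only through $\int_Y H\lesssim N R^{\alpha/2}$ together with the incidence count $NM\lessapprox |\mathbb W| R^{1/2}$. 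Your dimension-reduction route is plausible and makes the role of the intermediate dimension $m$ more transparent, but it needs some care at two places you gloss over: the slice weight $\tilde H$ must obey its ball condition \emph{uniformly} in $x''$ (the hypothesis on $H$ only gives it after averaging in $x''$; take $\tilde H(x',x'')$ to be the $R^{1/2}$-ball $x''$-average of $H$ and use the local constancy of $|f|$ in $x''$), and the cube-multiplicity transfers from $\R^d$ to $\R^m$ only up to $O(1)$ losses because distinct $T\in\WW$ may slice to the same $T'\subset\R^m$.

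For part (c), your plan is essentially the paper's plan --- decouple at scale $\rho=r^{-2}$ via (b), then at scale $r^2R$ via Theorem~\ref{thm-RD-lp-0}(a) after parabolic rescaling --- but the incidence bookkeeping has a genuine gap. You state the needed bound as $\sup_B M_B\cdot\sup_\tau\tilde M_\tau\lesssim_\eps R^\eps M$ and verify it only in the uniform model. That supremum version is not true in general: one subfamily of $\WW$ can make $M_{B_0}$ large for some ball $B_0$ while a disjoint subfamily makes $\tilde M_{\tau_0}$ large for some $\tau_0$, and the product $M_{B_0}\tilde M_{\tau_0}$ can exceed $M$ by a power of $R$. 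The paper circumvents this by two dyadic pigeonholings (over $M_2$, the multiplicity of $T\in\WW_\Box$ in a $\Box_1$-cylinder, and over $M_1$, the multiplicity of $r^{-1}$-tubes $T_1$ with $\Box_1(T_1)\subset Y_{\Box(T_1)}$ through an $r^{-1}$-cube), which fix \emph{single} values $M_1,M_2$ carrying the bulk of the $L^p$ mass; only for these pigeonholed values does the nested incidence argument, keyed to the containment $\Box_1(T_1)\subset Y_{\Box(T_1)}$, give $M_1M_2\lesssim M$. Your argument needs these pigeonholing steps inserted, and the suprema replaced by the pigeonholed multiplicities, for the two scales to interlock.
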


Note that Theorem \ref{thm-RD-lp-0}(a)(b) is a special case of Theorem \ref{thm-RD-lp}(a)(b) with $m=d$. Morally speaking, Theorem \ref{thm-RD-lp}(a)(b) says that if $G(\theta)$'s are concentrated around a subspace, then one can apply decoupling in a lower dimensional space. Theorem \ref{thm-RD-lp}(c) is obtained by a two-step decoupling process, combining Theorem \ref{thm-RD-lp}(b) to first decouple frequencies into $r$-caps and Theorem \ref{thm-RD-lp-0}(a) to further decouple $r$-caps into smaller $R^{-1/2}$-caps.

\begin{remark}
    (i). In our application to the Falconer distance set problem, the weight function $H$ satisfies the ball condition at all scales $\geq 1$. But in our proof of Theorem \ref{thm-RD-lp}, we only need the ball condition at a single scale: scale $R^{1/2}$ for part (b) and scale $\frac 1r$ for part (c).

    (ii). Though Theorem \ref{thm-RD-lp} is stated for all $\alpha\leq d$, better result exists when $\alpha<d-m$. For $d-m\leq \alpha< d-\frac{m+1}{2}$, there may also be room to further improve Theorem \ref{thm-RD-lp}. We don't explore these directions in the current paper as they will not help in our application to Falconer's distance problem. 
\end{remark}

For large $\alpha$, the gain $R^{\frac 12 (\alpha -d)(\frac 1p -\frac{1}{p_m})}$ when $\alpha<d$ and $p<p_m$ in Theorem \ref{thm-RD-lp}(b) is sharp.

\begin{theorem} \label{thm: eg}
    Let $2\leq m\leq d$, $d-\frac{m+1}{2}\leq \alpha\leq d$. Then there are $f, Y$ and $H$ satisfying the conditions of Theorem \ref{thm-RD-lp}(b) such that for any $p>0$ there holds
    \begin{equation}\label{eq-RD-eg}
    \|f\|_{L^p(Y;Hdx)} \gtrsim R^{\frac 12 (\alpha -d)(\frac 1p -\frac{1}{p_m})} M^{\frac 12 -\frac 1p} \left(\sum_{T\in \mathbb W}\|f_T\|_{L^p}^p\right)^{\frac 1p}.
\end{equation}
In fact, the weight function $H$ here satisfies the ball condition at all scales up to $R^{1/2}$:
$$
\int_{\tilde Q} H(x)dx\lesssim s^\alpha, \quad \forall s\text{-cube } \tilde Q, \,\forall 0<s\leq R^{1/2};
$$
moreover, if in addition $\al\geq m$, then $H$ satisfies the ball condition at all scales up to $R$.
\end{theorem}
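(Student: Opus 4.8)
The plan is to exhibit, for each $\alpha$ in the stated range, an explicit triple $(f,Y,H)$ — a wave-packet example together with a fractal weight — saturating \eqref{eq-RD-lp-R-alpha}, and then to check that $H$ obeys the ball condition at precisely the claimed scales. First I set up the wave-packet side. Fix an $m$-plane $V\subset\R^d$ and let each $f_T$, $T\in\W$, be a (roughly) unit-amplitude wave packet microlocalized to $(T,\theta(T))$ with $G(\theta(T))$ within $R^{-1/2}$ of $V$; there are $\sim R^{(m-1)/2}$ such caps, so $f=\sum_{T\in\W}f_T$ automatically has $(R^{-1/2},m)$-concentrated frequencies, and since $\|f_T\|_{L^p}^p\sim|T|\sim R^{(d+1)/2}$ (up to $R^{O(\be)}$), the right side of \eqref{eq-RD-eg} becomes a pure monomial in $R$, $M$ and $\#\W$. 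The basic building block is a \emph{bush}: one tube per relevant cap through a common point. On the slab $\{\dist(x,V)\lesssim R^{1/2}\}$ a bush makes $f$ behave, by stationary phase for the extension operator of the $(m-1)$-dimensional submanifold $S_V=\{x\in S:\text{the normal to }S\text{ at }x\text{ lies in }V\}$, like a function of the projection $y$ of $x$ to $V$ alone, with a peak $\sim\#\W$ on a unit core and decay $|f|\sim(\#\W)\max(1,|y|)^{-(m-1)/2}$ on the wave zone. The actual $\W$ is obtained by combining such bushes (translated so as to populate a prescribed family of $R^{1/2}$-cubes, and possibly supplemented by a generic/flat family and/or a second amplitude) so that $|f|$ realizes a controlled range of level sets.

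Second, I choose $Y$ to be the union of all $R^{1/2}$-cubes meeting at most the prescribed number $M$ of tubes, and take $H=\mathbf 1_E$, where in each cube $Q\subset Y$ one keeps an $\alpha$-dimensional piece of $E\cap Q$ of measure $\sim R^{\alpha/2}$ placed where $|f|$ is largest on $Q$. The ball condition of $E$ at scales $s\le R^{1/2}$ holds by making this piece $\alpha$-Ahlfors-regular; at scales $s\ge R^{1/2}$, counting the $R^{1/2}$-subcubes of an $s$-cube reduces the ball condition to $s^{m-\alpha}\lesssim R^{(m-\alpha)/2}$, which holds exactly when $\al\ge m$. This reproduces the two regimes in the statement.

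Third, I evaluate both sides: the right side is immediate, and $\|f\|_{L^p(Y;Hdx)}^p=\int_E|f|^p$ is computed by splitting $E$ according to dyadic distance $\rho$ to the bush center(s), using $|f|\sim(\#\W)\rho^{-(m-1)/2}$ on each shell, and summing the resulting geometric series governed by the exponent $m-p(m-1)/2$ — the core giving the peak and different $p$-ranges being dominated by different pieces. One then optimizes the free parameters (number and arrangement of bushes, the truncation radius defining $M$ and $Y$, the optional second amplitude and generic family) so that $\int_E|f|^p$ matches the monomial target for \emph{every} $p>0$ at once.

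The main obstacle is exactly this last requirement. A single bush is extremal only at $p=p_m$ and fails for intermediate $p$ (one checks that $\int_E|f|^p$ stays flat while the target keeps growing on a range below $p_m$), whereas a purely flat example is extremal only at $\al=d$; the correct construction must interpolate between the two. As a function of $p$, $\log_R\big(\int_E|f|^p\big)^{1/p}$ is piecewise affine with breakpoints fixed by the geometry (notably at $p=p_d$ and $p=p_m$) while the logarithm of the target is exactly affine in $1/p$, and forcing the former to dominate the latter for all $p$ pins the construction down rigidly — matching slopes and breakpoints is where the real work lies. It is also here that the hypothesis $\al\ge d-\tfrac{m+1}{2}$ enters: only in that range do the available level-set geometry and the associated ball condition permit such an interpolation (for smaller $\al$ one would need a stronger estimate than \eqref{eq-RD-lp-R-alpha}, consistent with Remark (ii)).
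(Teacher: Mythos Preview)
Your proposal correctly identifies the central difficulty but does not resolve it. You explicitly say that matching the target for every $p>0$ simultaneously is ``where the real work lies'', and then stop short of doing that work: the bush/flat interpolation is described only in outline, the free parameters are never fixed, and the piecewise-affine matching of $\log_R(\int_E|f|^p)^{1/p}$ against the affine target is never carried out. As you yourself note, a single bush is extremal only at $p=p_m$ and a flat example only at $\alpha=d$; without the actual interpolation, there is no proof. The ball-condition paragraph is fine, but it is not the issue.

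The paper avoids this obstacle entirely by taking a different route. Rather than a bush (where $|f|$ decays and different dyadic shells dominate for different $p$), it uses an \emph{arithmetic} example adapted from the lattice construction in \cite{BBCRV}: take $g=\chi_\Omega$ with
\[
\Omega=\Big[B^{d-m}_{cR^{-1/2}}\times\big(2\pi R^{-\kappa}\mathbb Z^{m-1}+B^{m-1}_{cR^{-1}}\big)\Big]\cap B^{d-1}(0,1),
\]
set $f=Eg$, and take $H=\chi_\Lambda$ where $\Lambda$ is the dual lattice set on the physical side. The key point is that for $x\in\Lambda$ and $\xi\in\Omega$ the phase lies in $2\pi\mathbb Z+(-\tfrac1{100},\tfrac1{100})$, so $|f|\sim|\Omega|$ is \emph{constant} on the support of $H$. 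Then $\|f\|_{L^p(Y;Hdx)}\sim|\Omega|\,|\Lambda|^{1/p}$, the right side is likewise a pure monomial, and their ratio is $R^{\kappa((m-1)/2-(m+1)/p)}$; the single choice $\kappa=\tfrac{d-\alpha}{2(m+1)}$ hits the target for all $p$ at once. The hypothesis $\alpha\ge d-\tfrac{m+1}{2}$ enters simply as the condition $\kappa\le\tfrac14$ needed for the lattice parameters to be consistent. No level-set decomposition, no interpolation, and no optimization are required: the ``all $p$'' problem that blocks your approach evaporates because $|f|$ does not vary on $\mathrm{supp}\,H$.
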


\begin{remark}
    In our example for Theorem \ref{thm: eg}, $(M, |\mathbb W|)$ is a fixed special pair of values. 
    We expect it to be a difficult and interesting question to further enquire how sharp the gain $R^{\frac 12 (\alpha -d)(\frac 1p -\frac{1}{p_m})}$ is for other given choices of $(M, |\mathbb W|)$. 
\end{remark}

\subsection{Application to Falconer's distance set problem}

Now let us see a classical question in geometric measure theory introduced by Falconer \cite{falconer1985hausdorff} in the early 80s. Let $E\subset\mathbb{R}^d$ be a compact set, its \emph{distance set} $\Delta(E)$ is defined by
$$
\Delta(E):=\{|x-y|:x,y\in E\}\,.
$$

\begin{conjecture*}\label{conj} \textup{[Falconer]}
Let $d\geq 2$ and $E\subset\mathbb{R}^d$ be a compact set. Then
$$
{\dim_H}(E)> \frac d 2 \Rightarrow |\Delta(E)|>0.
$$
Here $|\cdot|$ denotes the Lebesgue measure and ${\dim_H}(\cdot)$ is the Hausdorff dimension.
\end{conjecture*}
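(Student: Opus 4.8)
The plan is to deduce the stated implication — or rather its best currently available approximation, the threshold $\dim_H(E)>\tfrac d2+\tfrac14-\tfrac1{8d+4}$ for $d\ge 4$ announced in the abstract, since the full conjecture remains open — via the Fourier-analytic route through \emph{pinned} distance sets, feeding Theorem~\ref{thm-RD-lp} into the circle of ideas of Guth--Iosevich--Ou--Wang and Du--Zhang. First I would fix a Frostman probability measure $\mu$ supported on $E$ with $\mu(B(x,\rho))\lesssim \rho^s$ for $s$ slightly below $\dim_H(E)$, and recall (via the arguments of Liu and of Mattila--Shmerkin) that it suffices to produce a positive-$\mu$-measure set of $x$ for which the pushforward $\nu_x=(d_x)_*\mu$, $d_x(y)=|x-y|$, is an $L^2$ density; equivalently, after mollifying $\mu$ at scale $\delta$ and $\nu_x$ accordingly, one needs the bound $\int|\nu_x^\delta(t)|^2\,dt\lesssim 1$ uniform as $\delta\to0$, which after integrating in $x$ against $\mu$ and applying Chebyshev selects the good $x$.

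Next, using the asymptotic expansion of the Fourier transform of surface measure on $S^{d-1}$, I would decompose $\widehat{\nu_x}$ dyadically in frequency: the piece at frequency $\sim\lambda$ is, up to harmless factors, an extension operator for $S^{d-1}$ at scale $R=\lambda$ applied to $\mu$-weighted data and sampled near $x$. Thus the target reduces to estimating, for each dyadic $\lambda=\delta^{-1}$, a weighted square average $\int|Eg_\lambda|^2\,d\mu$. Here refined decoupling enters: partition $B_R$ into $R^{1/2}$-cubes, run the wave-packet decomposition of $Eg_\lambda$ into tubes $T\in\mathbb T$, pigeonhole the cubes according to the number $M$ of tubes they meet and the tubes according to the value of $\mu$ on them, and — after removing a bad part of $\mu$ concentrated near a lower-dimensional set — pigeonhole over the concentration scale $r$ and dimension $m$ of the $m$-plane that the surviving caps $G(\theta(T))$ cluster around. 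One then takes for the weight $H$ in Theorem~\ref{thm-RD-lp}(b),(c) a discretized version of $\mu$ at scale $R^{1/2}$ (resp.\ $1/r$): the Frostman condition is exactly the required ball hypothesis $\int_QH\lesssim R^{\alpha/2}$ with $\alpha=s$ (resp.\ $\int_{Q'}H\lesssim(1/r)^\alpha$), so the non-concentrated case $m=d,\ r=R^{-1/2}$ is handled by part (b) and the concentrated case by part (c).

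Then I would combine the refined-decoupling output with the elementary single-cap bound (Córdoba $L^2$-orthogonality, or a bush estimate), insert the Frostman-type control on $\|f_T\|_p^p$ and on $M$, sum over the dyadic $\lambda$'s and over the pigeonholed parameters $M,r,m$, and optimize. Balancing the decoupling gains $R^{\frac12(\alpha-d)(\frac1p-\frac1{p_m})}$ and $r^{(d-\alpha)(\frac1p-\frac1{p_m})}(r^2R)^{\frac{d-1}4-\frac{d+1}{2p}}$ against the radial-decay and wave-packet losses makes $\int|\nu_x^\delta|^2$ converge precisely when $s>\tfrac d2+\tfrac14-\tfrac1{8d+4}$. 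I expect the main obstacle to be the bookkeeping of this two-parameter concentration structure: controlling simultaneously how $\mu$ (through $H$) and how the frequency support (through the $m$-plane and $r$) are distributed, and verifying that the part~(c) exponent wins uniformly over $R^{-1/2}\le r\le 1$ after summation. It is the endpoint losses in that step, rather than the decoupling inputs themselves, that fix the exact threshold and are the reason this method does not reach the sharp $s>d/2$ of Falconer's conjecture.
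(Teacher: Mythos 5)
The statement you were asked about is Falconer's conjecture itself, which the paper explicitly records as open in all dimensions; the paper contains no proof of it, and your proposal (correctly) does not claim one either. What you have sketched instead is the paper's partial result, Theorem \ref{thm: Leb}, and your outline does follow essentially the same route as Section \ref{sec: app}: Frostman measure, reduction to an $L^2$ bound for the pinned pushforward via Liu's identity, dyadic frequency localization to an extension operator at scale $R$, a good/bad wave-packet decomposition, pigeonholing over the concentration scale $r$ and the dimension $m$ of the plane around which the surviving caps cluster, and then the weighted refined decoupling of Theorem \ref{thm-RD-lp}(b),(c) with $H$ a mollified copy of the Frostman measure. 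So as a sketch of the achievable threshold, the approach is the right one and matches the paper.

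The one substantive gap is in the step you describe as ``removing a bad part of $\mu$ concentrated near a lower-dimensional set.'' That description matches the older Orponen-based removal (which only works for measures of dimension $>d-1$ and forces the lossy orthogonal projection in odd dimensions); it does not capture the paper's actual mechanism. The paper defines, for each tube $T$, the quantity $r(T)$ as the thickness of the smallest $\delta^\eta$-concentrated $(r,m)$-plate containing $2T$, declares $T$ bad when $\mu_2(4T)\geq \delta^{\alpha-2\varepsilon}/r(T)^{\alpha-(m-1)}$, and justifies discarding the bad tubes via the new radial projection theorem (Theorem \ref{conj:threshold}), which controls $\mu_2(T\setminus(A|_x\cup B|_x))$ by exactly $\delta^{\alpha-\eps}/r^{\alpha-(m-1)}$. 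This $r$-dependent threshold is what produces the counting bound $M\mu_2(Y)\lesssim |\mathbb W|\,\delta^{\alpha-2\varepsilon}/r^{\alpha-(m-1)}$ that you need to feed into the decoupling step, and it is precisely the interplay between this gain (better for large $r$) and the decoupling gain of Theorem \ref{thm-RD-lp}(c) (better for small $r$) that makes the optimization over $r=R^{-\gamma}$ close at $\frac d2+\frac14-\frac1{8d+4}$. Without specifying this ingredient, your ``balancing'' step in the final paragraph cannot actually be carried out, and the sketch would only recover the weaker thresholds of the earlier works.
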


Falconer's conjecture remains open in all dimensions as of today. It has attracted a great amount of attention in the past decades. To name a few landmarks: in 1985, Falconer \cite{falconer1985hausdorff} showed that $|\Delta(E)|>0$ if ${\dim_H}(E)>\frac{d}{2}+\frac{1}{2}$. Bourgain \cite{bourgain1994distance} was the first to lower the threshold $\frac{d}{2}+\frac{1}{2}$ in dimensions $d=2, d=3$ and to use the theory of Fourier restriction in the Falconer problem. The thresholds were further improved by Wolff \cite{wolff1999decay} to $\frac{4}{3}$ in the case $d=2$, and by Erdo\u{g}an \cite{erdogan2005} to $\frac{d}{2}+\frac{1}{3}$ when $d\geq 3$. These records were only very recently rewritten:
\[
\begin{cases}
\frac{5}{4}, &d=2, \quad\qquad\text{(Guth--Iosevich--Ou--Wang \cite{guth2020falconer})}\\
\frac{9}{5}, &d=3, \quad \qquad \text{(Du--Guth--Ou--Wang--Wilson--Zhang \cite{DGOWWZ})}\\
\frac d2+\frac 14+\frac{1}{8d-4}, &d\geq 3, \quad \qquad \text{(Du--Zhang \cite{du2019sharp})}\\
\frac{d}{2}+\frac 14, &d\geq 4 \text{ even}, \quad \text{(Du--Iosevich--Ou--Wang--Zhang \cite{du2021improved})}.
\end{cases}
\]

In this paper, we prove the following result on Falconer's conjecture using weighted refined decoupling estimates. Similar to \cite{guth2020falconer, du2021improved}, we in fact prove a slightly stronger version regarding the pinned distance set.

\begin{theorem}\label{thm: Leb}
Let $d\geq 3$ and $E\subset \mathbb{R}^d$ be a compact set. Suppose that 
\[
{\rm dim}_H(E)>  \begin{cases}\frac{d}{2}+\frac{1}{4}-\frac{1}{8d+4},& d\geq 4,\\ \frac{3}{2}+\frac{1}{4}+\frac{17-12\sqrt{2}}{4},& d=3.\end{cases}
\]Then, there is a point $x\in E$ such that $|\Delta_x(E)|>0$, where
$$
\Delta_x(E):=\{|x-y|:\, y\in E\}.
$$
\end{theorem}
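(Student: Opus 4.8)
The plan is to follow the now-standard framework of Falconer distance set problem via pinned distance measures, as developed in \cite{guth2020falconer, du2021improved}, but feeding in the weighted refined decoupling estimates of Theorem \ref{thm-RD-lp} in place of the unweighted ones used previously. First I would reduce matters, by a standard pigeonholing argument, to the following statement: if $\mu$ is a Frostman measure on $E$ with $\mu(B(x,r))\lesssim r^\alpha$ for all $x,r$ and $\alpha$ slightly below $\dim_H(E)$, then for a positive-$\mu$-measure set of pins $x$, the pushforward $(d_x)_*\mu$ of $\mu$ under $y\mapsto |x-y|$ is absolutely continuous with an $L^2$ density. By Plancherel and the standard dyadic decomposition, this reduces to an estimate of the form
\begin{equation}\nonumber
\int_E \int_{\lambda\sim 2^j} |\widehat{(d_x)_*\mu}(\lambda)|^2 \, d\lambda \, d\mu(x) \lesssim 2^{j(1-\e)}
\end{equation}
summed over dyadic $j$, which in turn (after expressing $\widehat{(d_x)_*\mu}$ via the spherical average $\int e^{i\lambda|x-y|}\,d\mu(y)$ and using stationary phase) becomes a weighted $L^2$ bound for an extension-type operator associated with the sphere, with the weight being (a thickened version of) $\mu$ itself.

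Next I would set up the wave-packet / refined-decoupling machinery at scale $R = 2^{2j}$. Decompose the relevant extension operator $E g$ (where $g$ is essentially $\mu$ transported to the sphere) into wave packets $f_T$ adapted to $R^{1/2}$-tubes $T$, microlocalized to $(T,\theta(T))$, and let $\mathbb{W}$ be the collection of significant tubes. The measure $\mu$, being $\alpha$-dimensional, gives a weight $H$ — built from a broadened $\mu$ — satisfying the ball condition $\int_Q H \lesssim R^{\alpha/2}$ at scale $R^{1/2}$ (and indeed at all scales $\geq 1$, cf. Remark (i)), and the set $Y$ where $\mu$ lives is a union of $R^{1/2}$-cubes each meeting at most $M$ tubes, where $M$ is governed by the same $\alpha$-dimensionality: essentially $M \lesssim (R^{1/2})^{?}$ with the exponent dictated by how many tubes can pile up over a ball, bounded using the Frostman condition. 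At this point I invoke Theorem \ref{thm-RD-lp}(b) (or (c) for the intermediate-dimension refinement, using $(r,m)$-concentrated frequencies coming from the geometry of the tubes tangent to a sphere) with $p = p_d = \frac{2(d+1)}{d-1}$, or interpolate between $p=2$ and $p=p_d$: the weighted decoupling bound yields a gain of $R^{\frac12(\alpha-d)(\frac1p - \frac1{p_d})}$ over the unweighted estimate, and it is precisely this gain that pushes the exponent below the Du--Zhang threshold $\frac d2 + \frac14 + \frac1{8d-4}$ down to $\frac d2 + \frac14 - \frac1{8d+4}$ for $d\geq 4$ (and gives the stated $d=3$ value, where the improvement is smaller because $p_d$ is farther from $2$).

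After applying decoupling, the remaining work is bookkeeping: combine the weighted decoupling output with (i) an $L^2$ orthogonality / locally-constant estimate handling the sum over tubes $\sum_T \|f_T\|_{L^p}^p$, controlled by the total mass and the $\alpha$-dimensionality of $\mu$; (ii) a broad–narrow or induction-on-scales step if one is working with the bilinear/multilinear form of the estimate rather than the linear one (following the structure in \cite{du2021improved}); and (iii) summing the dyadic pieces in $j$, where the $R^\e$ losses are absorbed since $\alpha$ is chosen strictly above the threshold. Finally one translates the resulting $L^2$ bound on $(d_x)_*\mu$ back to the conclusion $|\Delta_x(E)|>0$ via Cauchy--Schwarz on the support.

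The main obstacle I anticipate is the second step — correctly identifying the parameters $(M, Y, H, \alpha, m, r)$ that come out of the distance-set geometry and verifying that they satisfy the hypotheses of Theorem \ref{thm-RD-lp} with the exponents aligned so that the gain $R^{\frac12(\alpha-d)(\cdots)}$ lands exactly where it needs to. In particular, the tubes arising here are tangent to spheres of varying radii, so the "concentrated frequencies" hypothesis of parts (a)(b) versus (c) must be checked at the right scale, and one must track how the bound for $M$ (number of tubes through a cube) interacts with the weight exponent $\alpha$ — an over-generous bound for $M$ would kill the gain. A secondary technical point is ensuring the weight $H$ built from $\mu$ genuinely satisfies the single-scale ball condition required (Remark (i) tells us only one scale is needed, which simplifies this), and handling the passage between the pinned distance measure and the extension operator without losing the dimensional gain through the stationary-phase asymptotics.
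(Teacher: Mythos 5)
Your skeleton is pointing in the right direction (pinned distance pushforward, reduce to an $L^2$ estimate for spherical averages, wave-packet decomposition at scale $R$, feed in the weighted refined decoupling), but as written the proposal is missing the mechanism that actually produces the new threshold, so it would not close.

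The central point you omit is the \emph{good/bad tube decomposition} and, in particular, the new ``$r(T)$-dependent'' bad-tube threshold. The paper does not apply weighted decoupling directly to a wave-packet decomposition of $\mu$. Instead, it first builds a modified measure $\mu_{1,g}$ by discarding the ``bad'' wave packets $M_T\mu_1$, where $T$ is declared bad when $\mu_2(4T) \ge \delta^{\alpha-2\varepsilon}/r(T)^{\alpha-(m-1)}$, and $r(T)$ is the thickness of the smallest $\delta^\eta$-concentrated $(r,m)$-plate containing $2T$. The justification that these bad packets may be discarded (Proposition~\ref{mainest1}) is a new radial projection theorem of the third author, Theorem~\ref{conj:threshold}, which is valid below the ambient codimension-one barrier and is one of the two genuinely new ingredients of the paper --- your proposal does not mention it at all. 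Without it, one is stuck with the Orponen-based threshold from \cite{du2021improved} (effectively the $r(T)\sim\delta$ endpoint), which does \emph{not} yield the stated exponents, especially in odd dimensions. It is the \emph{combination} of (i) the $r(T)$-dependent bad threshold and (ii) the weighted decoupling gain, with a final optimization over the pigeonholed scale $r = R^{-\gamma}$, that gives $\frac{d}{2}+\frac{1}{4}-\frac{1}{8d+4}$ for $d\ge4$ and the stated $d=3$ value. Concretely: the good-tube threshold feeds in via the count $M\,\mu_2(Y) \lesssim |\mathbb{W}|\,R^{(-\frac12+\beta)(\alpha-2\varepsilon)}/r^{\alpha-(m-1)}$ (the analogue of your item (i)), and the $(R^{-1/2},m)$-concentration needed for Theorem~\ref{thm-RD-lp}(c) comes from pigeonholing to a \emph{single} heavy $(r,m)$-plate $H$ (legitimate by Lemma~\ref{lem:few_large_plates}), not from ``tangency to spheres of varying radii.''

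Two further corrections to the bookkeeping. First, the interpolation runs the opposite way from what you write: one applies the $L^p$ weighted decoupling for $p\in[p_d,p_m]$ and then H\"older $\|f\|_{L^2(\mu_2)}\le \|f\|_{L^p(\mu_2)}\,\mu_2(Y)^{1/2-1/p}$, with $\mu_2(Y)$ controlled by the good-tube count; interpolating down from $p_d$ to $2$ in the decoupling inequality itself would not give the stated gain. Second, the ``gain of $R^{\frac12(\alpha-d)(\frac1p-\frac1{p_d})}$'' from part (b) is not the whole story; the relevant estimate is Theorem~\ref{thm-RD-lp}(c), which has two factors, $r^{(d-\alpha)(\frac1p-\frac1{p_m})}$ and $(r^2R)^{\frac{d-1}{4}-\frac{d+1}{2p}}$, and the final exponent comes from optimizing a one-parameter family in $\gamma$ where $r=R^{-\gamma}$ (for $d\ge 4$ this optimizer sits at $\gamma=0$, for $d=3$ at $\gamma=\alpha/4$). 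Your paragraph on the ``main obstacle'' correctly flags that the interaction between $M$ and the weight exponent is delicate, but does not identify the instrument --- the $r(T)$-dependent threshold coming from the radial projection theorem --- that resolves it.
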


Theorem \ref{thm: Leb} improves the thresholds in \cite{DGOWWZ, du2019sharp, du2021improved} in all dimensions $d\geq 3$. The work \cite{DGOWWZ} uses the polynomial partitioning method developed by Guth \cite{guthPolynomial, guthPolynomialII} and refined Strichartz estimates from \cite{DGL, DGLZ}. The main ingredients in \cite{du2019sharp} are broad-narrow analysis, multilinear refined Strichartz estimates, Bourgain--Demeter's $l^2$ decoupling theorem, and a delicate induction on scales argument. In the current paper, we adapt the good tube/bad tube and refined decoupling method pioneered by \cite{guth2020falconer} for dimension $d = 2$ and continued in \cite{du2021improved} for even dimensions $d$. In both papers, Orponen's radial projection theorem \cite{orponen2018radial} plays a key role. However, the argument does not perform well for odd dimensions $d$, and the result of \cite{du2019sharp} provides a better bound for distance sets. The reason is that Orponen's radial projection theorem only works for sets with dimension $> d-1$, where $d$ is the dimension of the ambient space. To overcome this issue, \cite{du2021improved} projected the set onto a generic $(\frac{d}{2}+1)$-dimensional subspace of $\R^d$ (assuming $d$ is even). While this orthogonal projection trick works well in even dimensions, for odd dimensions we are forced to project to a $(\frac{d+1}{2})$-dimensional subspace instead, which creates some loss. To avoid this loss, a natural approach is to avoid the initial orthogonal projection; but then, we need a radial projection theorem that works for sets of dimension $\le d-1$.

One new ingredient in this paper is a radial projection result, Theorem \ref{conj:threshold}, by the third author \cite{KevinRadialProj}. For each $\delta$-tube $T$, let $r(T)\in [\delta, 1]$ be the thickness of the smallest heavy plate containing $T$ (see Section \ref{sec: outline} for the precise definition). We can remove more bad parts (see Section \ref{sec:bad}) using Theorem \ref{conj:threshold} and give a new threshold \eqref{badthreshold} for bad tubes depending on $r(T)$. To deal with the varying values of $r(T)$, we apply weighted refined decoupling estimates in Theorem \ref{thm-RD-lp}(c). In the case that $r(T)=\delta$, the threshold \eqref{badthreshold} is the same as the one obtained from combining Orponen's radial projection theorem and orthogonal projections; however, Theorem \ref{thm-RD-lp} gives an extra gain when $r(T)$ is small. As $r(T)$ increases, the threshold \eqref{badthreshold} gets much better.

\begin{remark}
   In a companion work \cite{DOKZFalconer}, we provide an alternative proof of Theorem \ref{thm: Leb} (in fact, in \cite{DOKZFalconer} we can establish the dimensional threshold $\frac d2 +\frac 14 -\frac{1}{8d+4}$ in all dimensions $d\geq 3$). Compared with \cite{DOKZFalconer}, in the current paper the construction of the good part $\mu_{1,g}$ is simpler and more intuitive so that the control of the bad part is much easier than that in \cite{DOKZFalconer}. On the other hand, the $L^2$ estimate for the good part is slightly complex, for which we need the new weighted refined decoupling.
\end{remark}

\subsection*{Outline}
In Section \ref{sec: dec}, we prove refined decoupling estimates in Theorem \ref{thm-RD-lp}. In Section \ref{sec: eg}, we present sharp examples for Theorem \ref{thm-RD-lp} in the case of large fractal dimensions to prove Theorem \ref{thm: eg}. In Section \ref{sec: app}, we discuss an application of weighted refined decoupling estimates to Falconer distance set problem - proof of Theorem \ref{thm: Leb}.

\subsection*{Notations.} 
Throughout the article, we write $A\lesssim B$ if $A\leq CB$ for some absolute constant $C$; $A\sim B$ if $A\lesssim B$ and $B\lesssim A$; $A\lesssim_\eps B$ if $A\leq C_\eps B$; $A\lessapprox B$ if $A\leq C_\eps R^\eps B$ for any $\eps>0, R>1$.

For a large parameter $R$, ${\rm RapDec}(R)$ denotes those quantities that are bounded by a huge (absolute) negative power of $R$, i.e. ${\rm RapDec}(R) \leq C_N R^{-N}$ for arbitrarily large $N>0$. Such quantities are negligible in our argument.

For subsets $E_1, E_2 \subset \R^d$, $\dist (E_1,E_2)$ is their Euclidean distance.

For $A \subset X \times Y$ and $x \in X$, define the slice $A|_x = \{ y \in Y : (x, y) \in A \}$. Similar definition for $A|_y$, when $y\in Y$.

We say a measure $\mu$ in $\R^d$ is an $\alpha$-dimensional measure with constant $C_\mu$ if it is a probability measure satisfying that
\begin{equation*}
\mu(B(x,t)) \leq C_\mu t^\alpha,\qquad \forall x\in \mathbb{R}^d,\, \forall t>0.
\end{equation*}

An $(r,m)$-plate $H$ in $\R^d$ is the $r$-neighborhood of an $m$-dimensional affine plane in the cube $[-10,10]^d$. More precisely, 
$$
H=\{z\in[-10,10]^d: \dist(z, P_H) < r\},
$$
where $P_H$ is an $m$-dimensional affine plane, which is called the central plane of $H$. A $C$-scaling of $H$ is 
$$
CH=\{z\in[-10,10]^d: \dist(z, P_H) < Cr\}.
$$

We say that an $(r,m)$-plate $H$ is $\gamma$-concentrated on $\mu$ if $\mu(H) \ge \gamma$.

Let $\cE_{r,m}$ be a set of $(r, m)$-plates with the following properties:
\begin{itemize}
    \item Each $(\frac{r}{2}, m)$-plate intersecting $B(0, 1)$ lies in at least one plate of $\cE_{r,m}$;

    \item For $s \ge r$, every $(s, m)$-plate contains $\lesim \left( \frac{s}{r} \right)^{(m+1)(d-m)}$ many $(r, m)$-plates of $\cE_{r,m}$.
\end{itemize}
For example, when $m = 1$ and $d = 2$, we can simply pick $\sim r^{-1}$ many $r$-tubes in each of an $r$-net of directions. This generalizes to higher $m$ and $d$ via a standard $r$-net argument, see \cite[Section 2.2]{KevinRadialProj} for the details of its construction.

\begin{acknowledgement}
XD is supported by NSF DMS-2107729 (transferred from DMS-1856475), NSF DMS-2237349 and Sloan Research Fellowship. YO is supported by NSF DMS-2142221 and NSF DMS-2055008. KR is supported by a NSF GRFP fellowship. RZ is supported by NSF DMS-2207281 (transferred from DMS-1856541), NSF DMS-2143989 and the Sloan Research Fellowship.
\end{acknowledgement}

\section{Weighted refined decoupling estimates - Proof of Theorem \ref{thm-RD-lp}} \label{sec: dec}
\setcounter{equation}0
In this section, we prove Theorem \ref{thm-RD-lp} for the truncated paraboloid. The proof can be generalized for any compact and strictly convex $C^2$ hypersurface with Gaussian curvature $\sim 1$ by standard arguments. In particular, in the proof of part (a), we use the fact that under the assumption of $(R^{-1/2}, m)$-concentration, one can apply Bourgain--Demeter's $l^2$-decoupling in dimension $m$. See \cite[Lemma 9.3]{guthPolynomialII} and \cite[Lemma 7.4]{DGL} for justification of this fact in the case of the truncated paraboloid, and one can follow \cite[Section 7]{BDdecoupling} to generalize this fact to hypersurfaces as in the above.

First, we present a slightly simplified proof of Theorem \ref{thm-RD-lp-0}(a) based on that in \cite{guth2020falconer}.

\vspace{20pt}

\emph{Proof of Theorem \ref{thm-RD-lp-0}(a).} 
Without loss of generality, we can assume that 
\begin{equation} \label{Qconst} \|  f \|_{L^p(Q)} \sim \textrm{ constant for all $R^{1/2}$-cubes $Q \subset Y$}. \end{equation}
	
Now we decompose $f$ as follows. We cover $\mathcal{S}$ with larger blocks $\tau$ of dimensions $R^{-1/4} \times \cdots \times R^{-1/4} \times R^{-1/2}$.  
For each $\tau$ we cover $B^d_R$ with cylinders $\Box$ with radius $R^{3/4}$ and length $R$, with the long axis perpendicular to $\tau$. Each cylinder $\Box$ is associated to a unique $\tau$, which we denote by $\tau(\Box)$. Then we define
$$\mathbb W_\Box := \{ T \in \mathbb W: \theta(T) \subset \tau(\Box) \textrm{ and } T \cap B_R \subset \Box \} $$
and define $f_\Box := \sum_{T\in \mathbb W_\Box} f_T. $  Note that $\widehat f_\Box$ is essentially supported in $\tau(\Box)$. 

Next, write each $\Box$ as a union of cylinders running parallel to the long axis of $\Box$, with radius $R^{1/2}$ and length $R^{3/4}$. Let $Y_{\Box, M'}$ be the union of those cylinders that each intersect $\sim M'$ of the tubes $T \in \mathbb W_\Box$. 

Now we dyadically pigeonhole $M'$ so that
\begin{equation} \label{decentcube}
    \| f \|_{L^p(Q)} \lessapprox \left\| \sum_{\Box:\, Q \subset Y_{\Box, M'}} f_\Box \right\|_{L^p(Q)} 
\end{equation}
for a fraction $\approx 1$ of $Q \subset Y$. We fix this value of $M'$, and from now on we abbreviate $Y_\Box = Y_{\Box, M'}$.  

Denote the collection of cylinders $\Box$ by $\mathbb B$. We dyadically pigeonhole the cubes $Q \subset Y$ according to the number of $\Box \in \mathbb{B}$ so that $Q \subset Y_\Box$.  We get a subset $Y' \subset Y$ so that for each cube $Q \subset Y'$, $Q \subset Y_\Box$ for $\sim M''$ choices of $\Box \in \mathbb{B}$, and $Q$ obeys \eqref{decentcube}.  Moreover, by dyadic pigeonholing, we have $|Y'| \approx |Y|$.  Since each cube $Q \subset Y$ has approximately equal $L^p$ norm, we also get $\| f \|_{L^p(Y')} \approx \| f \|_{L^p(Y)}$.
	
We also note that
\begin{equation} \label{eq-M}
    M'  M'' \lesssim M,
\end{equation}
because a cube $Q \subset Y'$ belongs to $Y_\Box$ for $\sim M''$ different $\Box$, and if $Q \subset Y_\Box$, then it belongs to $T$ for $\sim M'$ different $T \in \WW_\Box$.

Note that an $R^{1/2}$-cube $Q$ lies in one cylinder $\Box$ associated to each cap $\tau$. So by applying Bourgain--Demeter's $l^2$-decoupling \cite{BDdecoupling} at scale $R^{1/2}$ to the RHS of \eqref{decentcube}, for each $Q\subset Y'$ we get
\begin{equation} \label{decex} 
\| f \|_{L^p(Q)} \lessapprox R^{\frac 12 \gamma_d(p)}\left( \sum_{\Box: Q\subset Y_\Box} \| f_{\Box} \|_{L^p(Q)}^2 \right)^{1/2}. 
\end{equation}

The next ingredient is induction on scales.  After parabolic rescaling, the function $f_\Box$ with the decomposition $f_\Box = \sum_{T \in \WW_\Box} f_T$ on the subset $Y_\Box$ is equivalent to the setup of the theorem at scale $R^{1/2}$ instead of scale $R$.  So by induction on the radius, we get a version of our main inequality for each function $f_\Box$:
\begin{equation}\label{indbox} 
\| f_\Box \|_{L^p(Y_\Box)} \lesssim R^{\frac 12 (\gamma_d(p)+\epsilon)}  (M')^{\frac{1}{2} - \frac{1}{p}} \left(\sum_{T \in \WW_\Box}   \| f_T \|_{L^p}^p \right)^{1/p}. \end{equation}

Now, combining all these ingredients \eqref{eq-M}, \eqref{decex} and \eqref{indbox}, we are ready to estimate $\|f\|_{L^p(Y)}$:

\begin{align*}
    \|f\|^p_{L^p(Y)} & \lessapprox \sum_{Q\subset Y'}\|f\|^p_{L^p(Q)} \\
    &\lessapprox R^{\frac p2 \gamma_d(p)} \sum_{Q\subset Y'} \left( \sum_{\Box: Q\subset Y_\Box} \| f_{\Box} \|_{L^p(Q)}^2 \right)^{p/2}\\
    &\lesssim R^{\frac p2 \gamma_d(p)} (M'')^{\frac p2 -1} \sum_{Q\subset Y'}  \sum_{\Box: Q\subset Y_\Box} \| f_{\Box} \|_{L^p(Q)}^p\\
    &\lesssim R^{\frac p2 \gamma_d(p)} (M'')^{\frac p2 -1}  \sum_\Box \| f_{\Box} \|_{L^p(Y_\Box)}^p \\
    &\lesssim R^{p \gamma_d(p)+\frac{p\epsilon}{2}} (M'M'')^{\frac p2 -1}  \sum_\Box \sum_{T \in \WW_\Box}   \| f_T \|_{L^p}^p \\
    &\lesssim R^{p \gamma_d(p)+\frac{p\epsilon}{2}} M^{\frac p2 -1}  \sum_{T \in \WW}   \| f_T \|_{L^p}^p.
\end{align*}
Taking account of $\lessapprox$ throughout, we get
$$ \|f\|_{L^p(Y)} \lesssim R^{\gamma_d(p)+\frac{3\epsilon}{4}} M^{\frac 12 -\frac 1p} \left(\sum_{T\in \mathbb W}\|f_T\|_{L^p}^p\right)^{\frac 1p}.$$
This closes the induction and finishes the proof of Theorem \ref{thm-RD-lp-0}(a).

\vspace{20pt}

\emph{Proof of Theorem \ref{thm-RD-lp}(a)}. The proof is almost identical to that of Theorem \ref{thm-RD-lp-0}(a). The only difference is that, when applying Bourgain--Demeter's $l^2$-decoupling at scale $R^{1/2}$, one uses the decoupling in dimension $m$ (instead of $d$) because of the $(R^{-1/2},m)$-concentration assumption (see \cite[Lemma 9.3]{guthPolynomialII} and \cite[Lemma 7.4]{DGL} for justifications of similar statements). Also, note that after parabolic rescaling, the function $f_\Box$ with the decomposition $f_\Box = \sum_{T \in \WW_\Box} f_T$ on the subset $Y_\Box$ is equivalent to the setup of the theorem at scale $R^{1/2}$: the $f_\Box$ after rescaling has $(R^{-1/4},m)$-concentrated frequencies.

\vspace{20pt}

\emph{Proof of Theorem \ref{thm-RD-lp}(b)}. Now we prove part (b) using the case $p=p_m$ of part (a). Without loss of generality, we can assume that 
$$
|f_T|\sim \textrm{ constant for all $T\in \WW$},
$$
and each $R^{1/2}$-cube $Q$ in $Y$ intersects $\sim M$ tubes $T\in\WW$. Denote the number of $R^{1/2}$-cubes in $Y$ by $N$, and let $W=|\WW|$. Considering the incidence between $R^{1/2}$-cubes in $Y$ and tubes $T\in\WW$, we get 
$$
NM\lessapprox WR^{1/2}.
$$
By the assumption that $|f_T|\sim$ constant for all $T\in \WW$, we also get 
\begin{equation}\label{eqn: part c}
\left(\sum_{T\in \mathbb W}\|f_T\|_{L^{p_m}}^{p_m}\right)^{\frac {1}{p_m}}\lessapprox \left(\frac{1}{WR^{(d+1)/2}}\right)^{\frac 1p -\frac{1}{p_m}}
\left(\sum_{T\in \mathbb W}\|f_T\|_{L^p}^p\right)^{\frac 1p}.
\end{equation}

Combining all these ingredients together with the assumption that $\int_Q H(x)\,dx \lesssim R^{\alpha/2}$ for any $R^{1/2}$-cube $Q$ in $Y$, and applying the case $p=p_m$ of part (a) we get the following for any $p\leq p_m$:
\begin{align*}
    \|f\|_{L^p(Y;Hdx)} &\leq \left(\int_Y H\,dx\right)^{\frac 1p -\frac{1}{p_m}} \|f\|_{L^{p_m}(Y)}\\
    &\lessapprox \left(\int_Y H\,dx\right)^{\frac 1p -\frac{1}{p_m}} M^{\frac 12 -\frac{1}{p_m}} \left(\sum_{T\in \mathbb W}\|f_T\|_{L^{p_m}}^{p_m}\right)^{\frac {1}{p_m}}\\
    &\lessapprox M^{\frac 12 -\frac 1p} \left(\frac{NR^{\alpha/2}M}{WR^{(d+1)/2}}\right)^{\frac 1p -\frac{1}{p_m}}
\left(\sum_{T\in \mathbb W}\|f_T\|_{L^p}^p\right)^{\frac 1p}\\
    &\lesssim
    R^{\frac 12 (\alpha -d)(\frac 1p -\frac{1}{p_m})} M^{\frac 12 -\frac 1p} \left(\sum_{T\in \mathbb W}\|f_T\|_{L^p}^p\right)^{\frac 1p},
\end{align*}
as desired. 

\vspace{20pt}

\emph{Proof of Theorem \ref{thm-RD-lp}(c)}. We prove part (c) by combining two steps of refined decoupling inequalities from (b) and Theorem \ref{thm-RD-lp-0}(a).

Let $R^{-1/2}\leq r\leq 1$. Let $\tilde Y$ be a union of $r^{-2}$-cubes in $B_R$ such that each $r^{-2}$-cube $Q_1$ in $\tilde Y$ intersects some $R^{1/2}$-cube $Q$ in $Y$. Without loss of generality, we can assume that 
\begin{equation} \label{Q1const} \|  f \|_{L^p(Q_1;Hdx)} \sim \textrm{ constant for all $r^{-2}$-cubes $Q_1 \subset \tilde Y$},
\end{equation}
and 
\begin{equation} \|  f \|_{L^p(Y;Hdx)} \lessapprox \|  f \|_{L^p(\tilde Y;Hdx)} .
\end{equation}

Now we decompose $f$ as follows. We cover $\mathcal{S}$ with blocks $\tau$ of dimensions $r \times \cdots \times r \times r^{2}$.  
For each $\tau$ we cover $B^d_R$ with cylinders $\Box$ with radius $rR$ and length $R$, with the long axis perpendicular to $\tau$. Each cylinder $\Box$ is associated to a unique $\tau$, which we denote by $\tau(\Box)$. Then we define
$ \mathbb W_\Box := \{ T \in \mathbb W: \theta(T) \subset \tau(\Box) \textrm{ and } T \cap B_R \subset \Box \} $
and define $f_\Box := \sum_{T\in \mathbb W_\Box} f_T. $  Note that $\widehat f_\Box$ is essentially supported in $\tau(\Box)$. Denote the collection of boxes $\Box$ by $\mathbb B$.

Next, write each $\Box$ as a union of cylinders $\Box_1$ running parallel to the long axis of $\Box$, with radius $R^{1/2}$ and length $r^{-1}R^{1/2}$. Let $Y_{\Box, M_2}$ be the union of those cylinders that each intersect $\sim M_2$ of the tubes $T \in \mathbb W_\Box$. 

For each $r^{-2}$-cube $Q_1$ in $\tilde Y$, let 
$$
f|_{Q_1}=\sum_{T_1\in \TT[Q_1]} f_{T_1}
$$
be the wave packet decomposition of $f|_{Q_1}$ at scale $r^{-2}$. Each tube $T_1\in \TT[Q_1]$ has radius roughly $r^{-1}$ and length $r^{-2}$. Note that each $T_1$ is contained in a unique cylinder $\Box_1$ with radius $R^{1/2}$ and length $r^{-1}R^{1/2}$, which runs in the same direction as $T_1$. We denote this $\Box_1$ by $\Box_1(T_1)$. And this $\Box_1(T_1)$ is contained in a unique $\Box \in \mathbb B$, which runs in the same direction as $T_1$. We denote this $\Box$ by $\Box(T_1)$.

Now we dyadically pigeonhole $M_2$ so that
\begin{equation} \label{decentcube1}
    \| f \|_{L^p(Q_1;Hdx)} \lessapprox \left\| \sum_{T_1\in\TT[Q_1]:\, \Box_1(T_1)\subset Y_{\Box(T_1),M_2}} f_{T_1} \right\|_{L^p(Q_1;Hdx)} 
\end{equation}
for a fraction $\approx 1$ of $Q_1 \subset \tilde Y$. We fix this value of $M_2$, and from now on we abbreviate $Y_\Box = Y_{\Box, M_2}$.  

Next, write each $Q_1 \cap Y$ as a union of $r^{-1}$-cubes $Q'$. Let $Y_{Q_1, M_1}$ be the union of those $r^{-1}$-cubes that each intersect $\sim M_1$ of the tubes $T_1 \in \TT[Q_1]$ with $\Box_1(T_1) \subset Y_{\Box(T_1)}$. 

Now we dyadically pigeonhole $M_1$ so that
\begin{equation} \label{decentcube2}
    \| f \|_{L^p(Q_1;Hdx)} \lessapprox \left\| \sum_{T_1\in\TT[Q_1]:\, \Box_1(T_1)\subset Y_{\Box(T_1)}} f_{T_1} \right\|_{L^p(Y_{Q_1,M_1};Hdx)}
\end{equation}
for a fraction $\approx 1$ of $Q_1 \subset \tilde Y$. We fix this value of $M_1$, and from now on we abbreviate $Y_{Q_1} = Y_{Q_1, M_1}$. 

Let $\tilde Y'$ be the collections of $Q_1$ satisfying \eqref{decentcube2}. Since $\|  f \|_{L^p(Q_1;Hdx)} \sim \textrm{ constant }$ for all $Q_1 \subset \tilde Y$, we get 
$\|f\|_{L^p(Y;Hdx)}\approx\| f \|_{L^p(\tilde Y;Hdx)} \approx \| f \|_{L^p(\tilde Y';Hdx)}$.

We also note that
\begin{equation}\label{M}
    M_1  M_2 \lesssim M,
\end{equation}
because an $r^{-1}$ cube $Q' \subset Y_{Q_1}$ intersects $\sim M_1$ of the tubes $T_1 \in \TT[Q_1]$ with $\Box_1(T_1) \subset Y_{\Box(T_1)}$, and each $\Box_1(T_1) \subset Y_{\Box(T_1)}$ intersects $\sim M_2$ of the tubes $T \in \WW_{\Box(T_1)}$.

Let $p_d\leq p\leq p_m$. By applying part (b) to the RHS of \eqref{decentcube2}, for each $r^{-2}$-cube $Q_1$ in $\tilde Y'$, we have
\begin{equation}\label{dec1}
    \| f \|_{L^p(Q_1;Hdx)} \lessapprox r^{(d-\alpha)(\frac 1p -\frac{1}{p_m})} M_1^{\frac 12 -\frac 1p} \left(\sum_{T_1\in\TT[Q_1]:\, \Box_1(T_1)\subset Y_{\Box(T_1)}}\|f_{T_1}\|_{L^p}^p\right)^{\frac 1p}.
\end{equation}

Also, after parabolic rescaling, the function $f_\Box$ with the decomposition $f_\Box = \sum_{T \in \WW_\Box} f_T$ on the subset $Y_\Box$ is equivalent to the setup of Theorem \ref{thm-RD-lp-0}(a) at scale $r^2R$ instead of scale $R$. So by applying Theorem \ref{thm-RD-lp-0}(a) in the case $p\geq p_d$, we get
\begin{equation}\label{dec2} 
\| f_\Box \|_{L^p(Y_\Box)} \lessapprox (r^2 R)^{\frac{d-1}{4}-\frac{d+1}{2p}} M_2^{\frac{1}{2} - \frac{1}{p}} \left(\sum_{T \in \WW_\Box}   \| f_T \|_{L^p}^p \right)^{1/p}. \end{equation}

Now we are ready to estimate $\|f\|_{L^p(Y;Hdx)}$ by combining \eqref{M}, \eqref{dec1} and \eqref{dec2}:
\begin{align*}
    \|f\|^p_{L^p(Y;Hdx)} &\approx \sum_{Q_1\subset \tilde Y'}\| f \|^p_{L^p(Q_1;Hdx)} \\
    &\lessapprox \sum_{Q_1\subset \tilde Y'}r^{(d-\alpha)(1 -\frac{p}{p_m})} M_1^{\frac p2 -1} \left(\sum_{T_1\in\TT[Q_1]:\, \Box_1(T_1)\subset Y_{\Box(T_1)}}\|f_{T_1}\|_{L^p}^p\right) \\
    &\sim r^{(d-\alpha)(1 -\frac{p}{p_m})} M_1^{\frac p2 -1}  \sum_{\Box} \sum_{Q_1\subset \tilde Y'} \sum_{\underset{\Box_1(T_1)\subset Y_{\Box(T_1)},\, \Box(T_1) =\Box}{T_1\in\TT[Q_1]}}\|f_{T_1}\|_{L^p}^p \\
    & \lesssim r^{(d-\alpha)(1 -\frac{p}{p_m})}  M_1^{\frac p2 -1}  \sum_{\Box} \|f_\Box\|^p_{L^p(Y_\Box)}\\
    & \lessapprox r^{(d-\alpha)(1 -\frac{p}{p_m})} (r^2 R)^{\frac{(d-1)p}{4}-\frac{d+1}{2}} (M_1M_2)^{\frac p2 -1} \sum_\Box \left(\sum_{T \in \WW_\Box}   \| f_T \|_{L^p}^p \right) \\
    & \lesssim r^{(d-\alpha)(1 -\frac{p}{p_m})} (r^2 R)^{\frac{(d-1)p}{4}-\frac{d+1}{2}} M^{\frac p2 -1} \left(\sum_{T \in \WW}   \| f_T \|_{L^p}^p \right)\,.
\end{align*}
This concludes the proof of (c).

\section{A sharp example in the case of large fractal dimensions - Proof of Theorem \ref{thm: eg}} \label{sec: eg}
We consider the following example obtained by adapting the one in \cite{BBCRV} to intermediate dimensions. Similar adaptions can be found in \cite{DKWZ, du2019upper}.

Let $c=1/1000$ be a fixed small constant, $0<\ka<1/2$, and $2\leq m\leq d$. Denote
$$x=(x_1,\cdots,x_d)=(x',x'',x_d)\in B^d(0,R)\,,$$ $$\xi=(\xi_1,\cdots,\xi_{d-1})=(\xi',\xi'')\in B^{d-1}(0,1)\,,$$
where $$
x'=(x_1,\cdots,x_{d-m}), \quad
x''=(x_{d-m+1},\cdots,x_{d-1}),$$
$$
\xi'=(\xi_1,\cdots,\xi_{d-m}), \quad
\xi''=(\xi_{d-m+1},\cdots,\xi_{d-1}).$$

For simplicity, we denote $B^d(0,r)$ by $B^d_r$,  and write the interval $(-r,r)$ as $I_r$. Let $g(\xi)=\hichi_\Om(\xi)$, where the set $\Om$ is defined by
\begin{equation} \label{Om}
    \Om:=\left[B^{d-m}_{cR^{-1/2}} \times \left(2\pi R^{-\ka} \ZZ^{m-1}+B^{m-1}_{cR^{-1}}\right)\right]\cap B^{d-1}(0,1)\,.
\end{equation}
Take $f(x)=Eg(x)$, the Fourier extension of $g$ over the truncated paraboloid:
\begin{equation}\label{Ef}
   f(x):=Eg(x)=\frac{1}{(2\pi)^{d/2}} \int_{B^{d-1}(0,1)} e^{i(x'\cdot\xi'+x''\cdot \xi'' +x_d|\xi'|^2+x_d|\xi''|^2)}g(\xi)\,d\xi. 
\end{equation}

Next, we define a set $\La$ in $B^d(0,R)$ by \begin{equation}\label{La}
    \La:=\left[B^{d-m}_{cR^{1/2}}\times \left(R^{\ka}\ZZ^{m-1}+B^{m-1}_{c}\right)\times\left(\frac{1}{2\pi}R^{2\ka}\ZZ+I_{c}\right)\right] \cap B^d_R\,.
\end{equation}
And define $Y$ and $H$ by
\begin{equation} \label{Y H}
    Y:= B^{d-m}_{cR^{1/2}}\times B^m_R
\quad \text{ and } \quad   H:=\hichi_\La.
\end{equation}

From the definition, it follows that
\begin{equation}\label{OmSize}
   |\Om|\sim R^{(\ka-1)(m-1)-(d-m)/2}\,.
\end{equation}
and
\begin{equation}\label{sizeLa}
   |\La|\sim R^{(d-m)/2+(1-\ka)(m-1)+1-2\ka}=R^{(d+m)/2-\ka(m+1)}\,.
\end{equation}
And it is straightforward (for example, see \cite[proof of Lemma 3.1]{du2019upper}) to check
\begin{equation} \label{xix}
    x'\cdot\xi'+x''\cdot \xi'' +x_d|\xi'|^2+x_d|\xi''|^2 \in 2\pi \mathbb Z + (-\frac{1}{100},\frac{1}{100})\,,
\end{equation}
provided that $\xi\in\Om$ and $x\in\La$.

\begin{claim}
    For $d-\frac{m+1}{2}\leq \alpha\leq d$, we can take 
\begin{equation} \label{ka}
    \ka:=\frac{d-\al}{2(m+1)}
\end{equation}
such that $\ka\leq 1/4$, and $f, Y$ and $H$ defined as in the above satisfy the conditions of Theorem \ref{thm-RD-lp}(b), and for any $p>0$ there holds
    \begin{equation}\label{eq-RD-eg'}
    \|f\|_{L^p(Y;Hdx)} \gtrsim R^{\frac 12 (\alpha -d)(\frac 1p -\frac{1}{p_m})} M^{\frac 12 -\frac 1p} \left(\sum_{T\in \mathbb W}\|f_T\|_{L^p}^p\right)^{\frac 1p}.
\end{equation}
\end{claim}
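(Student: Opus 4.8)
The plan is to compute every quantity entering \eqref{eq-RD-eg'} as an explicit power of $R$ and to check that the choice \eqref{ka} of $\kappa$ forces the two sides to carry the same exponent, for every $p>0$. First I would bound the left-hand side from below. By \eqref{xix}, for $x\in\Lambda$ and $\xi\in\Omega$ the real part of $e^{i(x'\cdot\xi'+x''\cdot\xi''+x_d|\xi'|^2+x_d|\xi''|^2)}$ is at least $\cos(1/100)>0$, so $|f(x)|\ge\tfrac{1}{(2\pi)^{d/2}}\int_\Omega\cos(\cdots)\,d\xi\gtrsim|\Omega|$ for all $x\in\Lambda$. Since $\Lambda\subset Y$ and $H=\chi_\Lambda$, this yields $\|f\|_{L^p(Y;H\,dx)}^p=\int_\Lambda|f|^p\,dx\gtrsim|\Lambda|\,|\Omega|^p$, into which I substitute \eqref{OmSize} and \eqref{sizeLa}.

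Next I would describe the wave packet decomposition and read off $M$, $|\mathbb W|$ and $\|f_T\|_{L^p}$. Since in the $\xi''$-variables the clusters of $\Omega$ are $R^{-\kappa}$-separated with $R^{-\kappa}\gg R^{-1/2}$ (because $\kappa<1/2$), each $R^{-1/2}$-cap $\theta$ meets at most one cluster, while in the $\xi'$-variables $\Omega$ sits in a single $R^{-1/2}$-ball; hence $\widehat f=g=\chi_\Omega$ is supported in $\sim R^{\kappa(m-1)}$ caps $\theta$. A direct uncertainty/size computation shows that $f_\theta:=E(g\chi_\theta)$ is, up to rapidly decaying tails, a modulated bump of magnitude $\sim R^{-(d-m)/2-(m-1)}$ supported on the slab $\{|x'|\lesssim R^{1/2}\}\cap B_R$, which is comparable to $Y\cap B_R$; decomposing $f_\theta$ into wave packets on tubes of length $R$ and transverse width $R^{1/2+\beta}$ along $G(\theta)$ uses $\sim R^{(1/2-\beta)(m-1)}$ tubes $T\in\mathbb W_\theta$, each carrying $|f_T|\sim R^{-(d-m)/2-(m-1)}$ on the set $T\cap Y\cap B_R$ of volume $\sim(R^{1/2})^{d-m}(R^{1/2+\beta})^{m-1}R$. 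Each $G(\theta)$ lies within angle $\lesssim R^{-1/2}$ of the $m$-plane $V=\{x'=0\}$, so $f$ has $(R^{-1/2},m)$-concentrated frequencies. Consequently $|\mathbb W|\sim R^{\kappa(m-1)}R^{(1/2-\beta)(m-1)}$; each $R^{1/2}$-cube of $Y$ meets $\sim 1$ tube of each $\mathbb W_\theta$, so $M\sim R^{\kappa(m-1)}$; and the $R^{\pm\beta}$ factors cancel between $|\mathbb W|$ and $|T\cap Y\cap B_R|$, giving $\sum_{T\in\mathbb W}\|f_T\|_{L^p}^p\sim R^{\kappa(m-1)+(d+m)/2}\,R^{-p((m-1)+(d-m)/2)}$.

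Then I would check the ball hypothesis of Theorem \ref{thm-RD-lp}(b) and the bound on $\kappa$. For an $R^{1/2}$-cube $Q\subset Y$, the set $Q\cap\Lambda$ is full in the $d-m$ directions $x'$, contains $\sim R^{(1/2-\kappa)(m-1)}$ unit clusters in the $m-1$ directions $x''$ (whose spacing $R^\kappa$ is $<R^{1/2}$) and $\sim R^{1/2-2\kappa}$ clusters in the direction $x_d$ (whose spacing $R^{2\kappa}$ is $\le R^{1/2}$ precisely because $\kappa\le 1/4$), so $\int_Q H\,dx=|Q\cap\Lambda|\sim R^{d/2-\kappa(m+1)}$; with $\kappa=\tfrac{d-\alpha}{2(m+1)}$ this equals $R^{\alpha/2}$, as needed. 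The inequality $\kappa\le 1/4$ is immediate from $\alpha\ge d-\tfrac{m+1}{2}$, and $\kappa\ge 0$ from $\alpha\le d$ (the case $\kappa=0$, $\alpha=d$ is trivial).

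Finally, plugging the four displayed powers of $R$ (for $|\Lambda|$, $|\Omega|$, $M$, and $\sum_T\|f_T\|_{L^p}^p$) into \eqref{eq-RD-eg'} turns both sides into explicit monomials in $R$; using $p_m=\tfrac{2(m+1)}{m-1}$ together with $d-\alpha=2(m+1)\kappa$, a short computation shows the exponents coincide for every $p>0$, which is exactly what \eqref{ka} is engineered to achieve. I expect the one genuinely delicate point to be the wave-packet bookkeeping in the second step: one must confirm that each $f_\theta$ really spreads over the whole slab $Y$ rather than concentrating on a single tube (this is what makes $|\mathbb W|$ and $M$ as large as claimed) and track how the tube width $R^{1/2+\beta}$ enters so that it ultimately drops out. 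The volume estimates \eqref{OmSize}, \eqref{sizeLa}, the pointwise lower bound from \eqref{xix}, and the closing exponent arithmetic are then routine.
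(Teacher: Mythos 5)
Your proposal is correct and follows essentially the same route as the paper's own proof: lower-bound $|f|\gtrsim|\Omega|$ on $\Lambda$ via \eqref{xix}, count $\sim R^{\kappa(m-1)}$ caps $\theta$ on which $|f_\theta|\sim R^{-(d-m)/2-(m-1)}$ over the slab $Y$ (with $M\sim R^{\kappa(m-1)}$ and $\sum_T\|f_T\|_{L^p}^p\sim\sum_\theta\|f_\theta\|_{L^p(Y)}^p$), verify $\int_Q H\sim R^{\alpha/2}$ from the lattice structure of $\Lambda$, and close by matching the resulting power $R^{\kappa(\frac{m-1}{2}-\frac{m+1}{p})}$ with $R^{\frac12(\alpha-d)(\frac1p-\frac1{p_m})}$. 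The one "delicate point" you flag — that $f_\theta$ fills the whole slab rather than a single tube, because the $\xi''$-extent $cR^{-1}$ of each cluster forces the physical spread $\sim R$ in $x''$ — is exactly the observation implicit in the paper's computation, so your write-up is a faithful (and slightly more detailed) version of the paper's argument.
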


Indeed, from the constructions, we have the following properties:
\begin{itemize}
    \item $f$ has $(R^{-1/2},m)$-concentrated frequencies;
    \item By \eqref{xix}, $|f(x)|\sim |\Om|, \forall x\in \La$;
    \item The Fourier support of $f$ are partitioned into $\sim R^{\ka(m-1)}$ many parabolic caps $\theta$, each of radius $R^{-1/2}$;
    \item For each fixed $\theta$, let $f_\theta:=(\hat f|_\theta)^\vee$. Then, by a reason similar to \eqref{xix}, we get    $|f_\theta|\sim R^{-(d-m)/2-(m-1)}$ on $Y$;
    \item Take $\mathbb W := \{T\in \mathbb T: T\subset Y\}$. Then for each $R^{1/2}$-cube in $Y$, there are $M\sim R^{\ka(m-1)}$ many $T\in \mathbb W$ passing through it;
    \item For each $R^{1/2}$-cube $Q$ in $Y$,
    $$
    \int_Q H(x)\,dx =|Q\cap \La|\sim R^{d/2-(m+1)\ka}=R^{\al/2}\,,
    $$
    by the choice of $\ka$ in \eqref{ka}.   
\end{itemize}

Therefore, 
$$
\|f\|_{L^p(Y;Hdx)} \sim |\Om|\cdot |\La|^{1/p}\,,
$$
and
\[
\begin{split}
    &M^{\frac 12 -\frac 1p} \left(\sum_{T\in \mathbb W}\|f_T\|_{L^p}^p\right)^{\frac 1p} \sim M^{\frac 12 -\frac 1p} \left(\sum_{\theta}\|f_\theta\|_{L^p(Y)}^p\right)^{\frac 1p} \\
    &\sim R^{\frac{\ka(m-1)}{2}}R^{-\frac{d-m}{2}-(m-1)}R^{\frac{d+m}{2p}}\,,
\end{split}
\]
and thus 
$$
\frac{\|f\|_{L^p(Y;Hdx)}}{M^{\frac 12 -\frac 1p} \left(\sum_{T\in \mathbb W}\|f_T\|_{L^p}^p\right)^{\frac 1p}} \sim R^{\ka(\frac{m-1}{2}-\frac{m+1}{p})}=R^{\frac 12 (\alpha -d)(\frac 1p -\frac{1}{p_m})}\,,
$$
as desired.

Moreover, by direct computation, one can verify the ball condition at all scales up to $R^{1/2}$:
$$
\int_{\tilde Q} H(x)dx =|\tilde Q \cap \La | \lesssim s^\alpha, \quad \forall s\text{-cube } \tilde Q \text{ in } Y, \forall 0<s\leq R^{1/2};
$$
and also at all scales up to $R$ if in addition $\al\geq m$.

This completes the proof of Theorem \ref{thm: eg}.

\section{Application to Falconer distance set problem} \label{sec: app}
\setcounter{equation}0

As an application of Theorem \ref{thm-RD-lp}, in this section we give a proof of Theorem \ref{thm: Leb} which is different from that in \cite{DOKZFalconer}.

We first recall the following new radial projection theorem, which follows from \cite[Theorem 1.13]{KevinRadialProj}.
\begin{theorem}\label{conj:threshold}
Let $m \in \{ 1, 2, \cdots, d-1 \}$, $m-1 < \alpha \le m$, and fix $\eta, \eps > 0$, and two $\alpha$-dimensional measures $\mu_1, \mu_2$ with constants $C_{\mu_1}, C_{\mu_2}$ supported on $E_1, E_2 \subset B(0, 1)$ respectively. There exists $\gamma > 0$ depending on $\eta, \eps, \alpha, m$ such that the following holds. Fix $\delta < r < 1$. Let $A$ be the set of pairs $(x, y) \in E_1 \times E_2$ satisfying that $x$ and $y$ lie in some $\delta^\eta$-concentrated $(r,m)$-plate on $\mu_1 + \mu_2$. Then there exists a set $B \subset E_1 \times E_2$ with $\mu_1 \times \mu_2 (B) \le \delta^{\gamma}$ such that for every $x \in E_1$ and $\delta$-tube $T$ through $x$, we have
\begin{equation*}
    \mu_2 (T \setminus (A|_x \cup B|_x)) \lesim \frac{\delta^\alpha}{r^{\alpha-(m-1)}} \delta^{-\eps}.
\end{equation*}
The implicit constant may depend on $\eta, \eps, \alpha, m, C_{\mu_1}, C_{\mu_2}$.
\end{theorem}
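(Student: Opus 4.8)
\emph{Proof proposal.} The plan is to obtain Theorem~\ref{conj:threshold} as a consequence of the discretized radial projection estimate \cite[Theorem 1.13]{KevinRadialProj}, which controls, for a pair of $\alpha$-dimensional measures and a scale $r$, how much mass a $\delta$-tube through a point can capture, the only obstruction being concentration of the measures on a common $(r,m)$-plate. Thus the essential work is not new analysis but (i) repackaging the conclusion of that theorem in the plate-and-tube language used here, and (ii) arranging a \emph{single} exceptional set $B$ that is independent of the point $x$ and of the tube $T$.

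Concretely, I would proceed as follows. Fix $x \in E_1$ and a $\delta$-tube $T$ through $x$. If \cite[Theorem 1.13]{KevinRadialProj} is phrased through radial projections $\pi_x(y) = \frac{y-x}{|y-x|}$, decompose $T$ into the dyadic annular pieces $T \cap \{ y : |y-x| \sim 2^{-j}\}$ for $\delta \lesim 2^{-j} \lesim 1$; on each such piece $T$ coincides with the $\pi_x$-preimage of a spherical cap of radius $\sim 2^{j}\delta$, so $\mu_2(T \setminus (A|_x \cup B|_x))$ is bounded by a sum over $j$ of cap-masses of the pushforwards of $\mu_2$ restricted to the annuli, which is exactly the quantity estimated in \cite[Theorem 1.13]{KevinRadialProj}. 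Apply that theorem at scale $r$ to each dyadic piece: for each $j$ it yields an exceptional set $B_j \subset E_1 \times E_2$ with $\mu_1 \times \mu_2(B_j) \le \delta^{\gamma_0}$ (some $\gamma_0 = \gamma_0(\eta,\eps,\alpha,m)>0$), outside of which—and outside the pairs already lying in a $\delta^\eta$-concentrated $(r,m)$-plate, i.e.\ outside $A$—the contribution of that piece is $\lesim \frac{\delta^\alpha}{r^{\alpha-(m-1)}}\delta^{-\eps/2}$. Then set $B = \bigcup_j B_j$: this is a union of $O(\log\tfrac1\delta)$ sets, so $\mu_1\times\mu_2(B) \le \delta^{\gamma_0}\log\tfrac1\delta \le \delta^{\gamma}$ for a slightly smaller $\gamma>0$, and summing the $O(\log\tfrac1\delta)$ dyadic contributions costs one more logarithmic factor, absorbed into $\delta^{-\eps}$. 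Finally one checks that the various plate sets arising at the dyadic scales are all contained in the single set $A$ attached to scale $r$: since an $(s,m)$-plate with $s \le r$ sits inside an $(r,m)$-plate, and a heavy sub-plate forces its container to be heavy, this containment is immediate. If instead \cite[Theorem 1.13]{KevinRadialProj} is already stated directly for $\delta$-tubes, the dyadic step is unnecessary and the argument reduces to a change of notation together with the same assembly of the exceptional set.

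The step I expect to be the main obstacle is the uniformity requirement: Theorem~\ref{conj:threshold} demands a single $B$ valid simultaneously for \emph{every} $x \in E_1$ and \emph{every} $\delta$-tube through $x$, whereas Kaufman-type radial projection bounds are naturally averaged (``for $\mu_1$-a.e.\ $x$'') statements. This forces one to invoke \cite[Theorem 1.13]{KevinRadialProj} in its strong pointwise-in-$x$, uniform-in-$T$ form and to verify that taking the union over the $O(\log\tfrac1\delta)$ dyadic scales does not spoil this uniformity. A secondary bookkeeping point is the asymmetry between hypothesis and conclusion—the plate is required to be heavy for $\mu_1+\mu_2$ while the tube is weighted by $\mu_2$ alone—so one must apply the symmetric version of \cite[Theorem 1.13]{KevinRadialProj} and track constants depending on $C_{\mu_1}, C_{\mu_2}$ through the reduction.
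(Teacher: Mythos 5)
The paper itself offers no proof of this theorem beyond the citation ``follows from \cite[Theorem 1.13]{KevinRadialProj}''---so there is no argument in the paper to compare against, only a pointer to the source result. Your proposal is a sensible reconstruction of how that deduction would go, and you correctly flag the two genuine subtleties: obtaining a \emph{single} exceptional set $B$ valid uniformly over all $x \in E_1$ and all $\delta$-tubes through $x$ (rather than an almost-everywhere-in-$x$ statement), and the asymmetry between the heaviness condition on $\mu_1 + \mu_2$ and the mass bound for $\mu_2$ alone. However, the proposal hedges on the exact formulation of \cite[Theorem 1.13]{KevinRadialProj} (``if it is phrased through radial projections \dots if instead it is already stated for $\delta$-tubes \dots''), which means it is a conditional plan rather than a verified derivation: the dyadic-annulus decomposition, the logarithmic loss absorption, and the plate-containment bookkeeping you sketch are all plausible but would need to be checked against the actual statement. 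In fact the cited theorem in \cite{KevinRadialProj} is already formulated in essentially the tube-and-plate language used here, with the exceptional set built in, so the reduction is closer to your ``change of notation'' scenario than to the dyadic-annulus scenario; the substantive content (Frostman-type mass bound of order $\delta^{\alpha}/r^{\alpha-(m-1)}$ outside a small exceptional set and the concentrated-plate set) is imported wholesale from the reference, which is consistent with how the paper treats it. So your proposal is correct in spirit and identifies the right issues, but it leaves the verification against the reference's precise statement to the reader, exactly as the paper does.
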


Essentially, this theorem says that, up to a small loss, one can assume that the wave packets associated with the fractal measure supported on the set of interest all have small mass. We explore this idea in detail in the next subsections.

\subsection{Outline of the proof of Theorem \ref{thm: Leb}}\label{sec: outline}

The setup of the proof is in the same line as \cite{guth2020falconer,du2021improved}. To begin with, let $E\subset \mathbb{R}^d$ be a compact set with positive $\alpha$-dimensional Hausdorff measure, with $\frac{d}{2}<\alpha<\frac{d+1}{2}$. Without loss of generality, assume that $E$ is contained in the unit ball, and there are subsets $E_1, E_2\subset E$, each with positive $\alpha$-dimensional Hausdorff measure, 
and ${\rm dist}(E_1, E_2)\gtrsim 1$. Then there exist $\alpha$-dimensional probability measures $\mu_1$ and $\mu_2$ supported on $E_1$ and $E_2$ respectively, according to the classical Frostman lemma.

To relate the measures to the distance set, we consider their pushforward measures under the distance map. For a fixed point $x\in E_2$, let $d^x:E_1\to \R$ be the pinned distance map given by $d^x(y):=|x-y|$. Then, the pushforward measure $d^x_\ast(\mu_1)$, defined as
\[
\int_{\mathbb{R}} \psi(t)\,d^x_\ast(\mu_1)(t)=\int_{E_1}\psi(|x-y|)\,d\mu_1(y),
\]
is a natural probability measure that is supported on $\Delta_x(E_1)$.

The idea is that we will construct another complex-valued measure $\mu_{1,g}$ that is the \emph{good} part of $\mu_1$ with respect to $\mu_2$, and study its pushforward under the map $d^x$. To set things up, we recall the following decomposition of a function into microlocalized pieces, which has been used in \cite{guth2020falconer,du2021improved}.  We will eventually be choosing the following small parameters with the dependence
$$
0<\beta\ll \gamma\ll\eta \ll \varepsilon \ll\epsilon.
$$

Fix a large parameter $R_0$, to be determined later, and consider a sequence of scales $R_j=2^j R_0$, $\forall j\geq 1$. In $\mathbb{R}^d$, cover the annulus $R_{j-1} \le |\omega| \le R_j$ by rectangular blocks $\tau$ with dimensions approximately $R_j^{1/2} \times \cdots \times R_j^{1/2} \times R_j$, with the long direction of each block $\tau$ being the radial direction.  Choose a smooth partition of unity subordinate to this cover such that
$$
1 = \psi_0 + \sum_{j \ge 1, \tau} \psi_{j, \tau},
$$
where $\psi_0$ is supported in the ball $B(0,2R_0)$.

Let $\beta > 0$ be a sufficiently small constant that we will choose later (depending on $\eta, \varepsilon$). For each $(j, \tau)$, cover the unit ball in $\mathbb{R}^d$ with tubes $T$ of dimensions approximately $R_j^{-1/2 + \beta} \times\cdots \times R_j^{-1/2+\beta} \times 2$ with the long axis parallel to the long axis of $\tau$. The covering has uniformly bounded overlap, each $T$ intersects at most $C(d)$ other tubes. We denote the collection of all these tubes as $\mathbb{T}_{j, \tau}$. Let $\eta_T$ be a smooth partition of unity subordinate to this covering, so that for each choice of $j$ and $\tau$,  $\sum_{T \in \mathbb{T}_{j, \tau}} \eta_T$ is equal to $1$ on the ball of radius $2$ and each $\eta_T$ is smooth.

For each $T \in \mathbb{T}_{j, \tau}$, define an operator
\[
M_T f := \eta_T (\psi_{j, \tau} \hat f)^{\vee},
\]
which, morally speaking, maps $f$ to the part of it that has Fourier support in $\tau$ and physical support in $T$.  Define also $M_0 f := (\psi_0 \hat f)^{\vee}$.  We denote $\mathbb{T}_j = \cup_{\tau} \mathbb{T}_{j, \tau}$ and $\mathbb{T} = \cup_{j \ge 1} \mathbb{T}_j$. Hence, for any $L^1$ function $f$ supported on the unit ball, one has the decomposition
\[
f = M_0 f + \sum_{T \in \mathbb{T}} M_T f+\text{RapDec}(R_0)\|f\|_{L^1}.
\]

Fix parameters $\eta, \varepsilon>0$ (these parameters will be chosen depending on $\epsilon$ in Proposition \ref{prop: l2}). Let $2T$ denote the concentric dilation of $T$ of twice the radius. For each $j\geq 1$ fixed, let $\delta=2 R_j^{-\frac{1}{2}+\beta}$ and consider a dyadic sequence of scales $r\in [C_0\delta, 1]$. Here, $C_0$ is a large constant depending on the separation of the sets $E_1, E_2$. For each fixed $r$, recall that $\mathcal{E}_{r,m}$ denotes a collection of essentially distinct $(r,m)$-plates such that every $(r/2, m)$-plate is contained in some element of $\mathcal{E}_{r,m}$. We further let $\mathcal{H}_r$ denote the subcollection of all $\delta^\eta$-concentrated $(r,m)$-plates of $\mathcal{E}_{r,m}$ on $\mu_1+\mu_2$. Here, $m\in \mathbb{Z}$ is the unique integer satisfying $m-1<\alpha\leq m$. 

For each tube $T \in \mathbb{T}_{j}$, define
\[
r(T):=\min\{{\rm dyadic }\,\,r\in [C_0 \delta,1]:\, \exists H\in \mathcal{H}_r \,\,{\rm s.t. }\,2T\subset H\}.
\]Here, $r(T)$ captures the most efficient choice of the scale of heavy plate that $T$ is contained in.

We say a tube $T \in \mathbb{T}_{j}$ is \emph{bad} if
\begin{equation} \label{badthreshold}
    \mu_2 (4T) \ge \frac{\delta^{\alpha-2\varepsilon}}{r(T)^{\alpha-(m-1)}}.
\end{equation}
Here, $\varepsilon$ is a fixed parameter and will be chosen in Proposition \ref{prop: l2} below. A tube $T$ is \emph{good} if it is not bad, and we define
\[
\mu_{1,g}:=M_0 \mu_1 + \sum_{T \in \mathbb{T}, T \textrm{ good}} M_T \mu_1.
\]
We point out that $\mu_{1,g}$ is only a complex valued measure, and is essentially supported in the $R_0^{-1/2+\beta}$-neighborhood of $E_1$ with a rapidly decaying tail away from it.

Theorem \ref{thm: Leb} will follow from the following two main estimates in the exact same way as in \cite{du2021improved}. We omit the details.

\begin{prop} \label{mainest1} Let $d\geq 3$, and $0<\alpha \leq d-1$. There exists a choice of $\beta>0$ and sufficiently large $R_0$ in the construction of $\mu_{1,g}$ in the above, such that there is a subset $E_2' \subset E_2$ so that $\mu_2(E_2') \ge 1 - \frac{1}{1000}$ and for each $x \in E_2'$,
$$	
	\| d^x_*(\mu_1) - d^x_*(\mu_{1,g}) \|_{L^1} < \frac{1}{1000}.
$$
\end{prop}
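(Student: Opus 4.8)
The plan is to show that $d^x_*(\mu_1) - d^x_*(\mu_{1,g}) = \sum_{T \text{ bad}} d^x_*(M_T \mu_1)$ has small $L^1$ norm for most $x \in E_2$, by estimating the total mass $\sum_{T \text{ bad}} \|M_T \mu_1\|$ weighted appropriately. The key point is that $\|d^x_*(M_T \mu_1)\|_{L^1} \lesssim \|M_T \mu_1\|_{L^1}$ uniformly, so it suffices to bound $\int_{E_2} \sum_{T \text{ bad through } x} \|M_T \mu_1\|_{L^1} \, d\mu_2(x)$ — or rather, to bound it after removing an exceptional set $E_2 \setminus E_2'$ of small $\mu_2$-measure via Chebyshev. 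So first I would record the pointwise/kernel bound that the distance pushforward of a single microlocalized piece $M_T \mu_1$ has $L^1$ norm comparable to its total variation, using that $M_T \mu_1$ is essentially supported on the tube $T$ and the distance map restricted to a thin tube is well-behaved (this is standard from \cite{guth2020falconer, du2021improved}).

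Second, I would fix a scale $R_j$ and the corresponding $\delta = 2R_j^{-1/2+\beta}$, and for each dyadic $r \in [C_0\delta, 1]$ group the bad tubes $T$ with $r(T) = r$. For such a tube, $2T$ sits inside a $\delta^\eta$-concentrated $(r,m)$-plate $H \in \mathcal{H}_r$, and badness means $\mu_2(4T) \geq \delta^{\alpha - 2\varepsilon} r^{-(\alpha - (m-1))}$. The idea is to integrate in $x$ over $E_2$: the contribution of bad tubes through $x$ is controlled by how much $\mu_2$-mass can pile up on tubes through $x$ that fail the radial projection bound. Here is where Theorem \ref{conj:threshold} enters — with the roles as set up in Section \ref{sec: outline}, it says that for each $x$ and each $\delta$-tube $T$ through $x$, $\mu_2(T \setminus (A|_x \cup B|_x)) \lesssim \delta^{\alpha}r^{-(\alpha-(m-1))}\delta^{-\eps}$, where $B$ has $\mu_1\times\mu_2$-measure $\leq \delta^\gamma$ and $A$ consists of pairs lying in a common $\delta^\eta$-concentrated $(r,m)$-plate. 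A bad tube $T$ with $r(T) = r$ has $\mu_2(4T)$ exceeding the Theorem's bound (since $\delta^{\alpha-2\varepsilon} \gg \delta^{\alpha-\varepsilon}$ once $\varepsilon$ beats the $\eps$ in the radial projection theorem), so its mass must come from $A|_x \cup B|_x$; the $A$-contribution is absorbed because being in $A$ means being in a heavy plate at some scale $\le r(T)$, which is consistent with — in fact forces — $T$ to be accounted for at that smaller scale, and a dyadic-pigeonhole/summation over scales $r$ controls the total. The $B$-contribution is handled by Chebyshev: $\mu_1\times\mu_2(B)\le\delta^\gamma$, so for all but a $\delta^{\gamma/2}$-fraction of $x\in E_2$ (in $\mu_2$-measure), $\mu_1(B|_x)\le \delta^{\gamma/2}$, and summing the geometric series over $j$ (the scales $R_j=2^jR_0$) with $R_0$ large makes the total $< 1/1000$.

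Third, I would assemble: choose $E_2'$ to be the set of $x$ surviving the Chebyshev step at every scale $R_j$ simultaneously (a union bound over $j$, again summable because $\delta = \delta(R_j)$ shrinks geometrically), giving $\mu_2(E_2') \geq 1 - 1/1000$ once $R_0$ is large enough; for such $x$, sum $\|M_T\mu_1\|$ over bad $T$ through $x$ across all scales $R_j$ and all dyadic $r$, getting a bound that is $\text{RapDec}(R_0) + (\text{geometric in } j) < 1/1000$. The dependence $0 < \beta \ll \gamma \ll \eta \ll \varepsilon \ll \epsilon$ is used exactly to make each step's loss beaten by the next parameter up; in particular $\beta$ is chosen last and tiny so that $\delta^\beta$-type losses from the wave-packet widening are harmless.

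The main obstacle I expect is the bookkeeping in the second step: correctly matching the hypotheses of Theorem \ref{conj:threshold} (the sets $A$, $B$, the roles of $\mu_1$ versus $\mu_2$, and the fact that $r(T)$ is defined via $2T \subset H$ while badness uses $\mu_2(4T)$) to the bad-tube count, and in particular verifying that bad tubes whose mass comes from $A|_x$ are genuinely "already counted" at a finer scale so that the dyadic sum over $r \in [C_0\delta, 1]$ does not blow up — this is the heart of why the threshold \eqref{badthreshold} is the right one, and it is where the $r(T)$-dependence (rather than a fixed scale) is essential.
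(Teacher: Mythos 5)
Your overall scaffolding (reduce to a single scale $R_j$, fix a dyadic $r$, invoke Theorem \ref{conj:threshold}, handle the exceptional set $B$ by Chebyshev, and sum a geometric series over $j$ using $R_0$ large) matches the paper, and you correctly identify that the delicate step is matching the hypotheses of Theorem \ref{conj:threshold} to the bad-tube count. However, your description of how the $A$-part is handled is not right and leaves a genuine gap. You say a bad tube's mass ``must come from $A|_x \cup B|_x$'' and that the $A$-contribution ``is absorbed'' by a dyadic pigeonhole over scales, but the paper's argument is sharper and qualitatively different: one applies Theorem \ref{conj:threshold} at scale $r_{\mathrm{RP}} = r(T)/8$, and then shows that for a bad tube $T$ with $r(T)>10C_0\delta$ one has $2T\cap A|_y=\emptyset$ \emph{exactly}. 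The reason is geometric and uses the minimality built into $r(T)$: if some $x\in 2T\cap A|_y$, then $x,y$ lie in a $\delta^\eta$-concentrated $(r_{\mathrm{RP}},m)$-plate $H$; since $x\in E_2$ and $y\in E_1$ are $\gtrsim 1$ separated and both in the thin tube $2T$, one gets $2T\subset 2H\subset H'$ for some $H'\in\cE_{r(T)/2,m}$ which is still $\delta^\eta$-concentrated, contradicting the definition of $r(T)$ as the smallest such scale. Without this, knowing that $\mu_2(T\setminus(A|_y\cup B|_y))$ is small does not let you bound $\mu_2(\mathrm{Bad}_j(y))$, because $\mu_2(A|_y)$ has no useful a priori bound, and your ``dyadic absorption'' idea has no concrete realization (at each scale $r$, the same emptiness holds, so nothing needs to be absorbed). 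Once the emptiness is in hand, the rest is a counting argument: the badness threshold $\mu_2(4T)\ge \delta^{\alpha-2\varepsilon}/r(T)^{\alpha-(m-1)}$ is used not to derive a contradiction but to bound the \emph{number} of bad tubes through $y$ via bounded overlap, which together with the per-tube bound $\mu_2(2T\setminus B|_y)\lesssim \delta^{\alpha-\varepsilon}/r^{\alpha-(m-1)}$ yields $\mu_2(\mathrm{Bad}_j^2(y)\setminus B|_y)\lesssim R_j^{\beta d}\delta^\varepsilon$.

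You also miss the boundary case $r(T)\in[C_0\delta,10C_0\delta]$, where one cannot pass to a strictly smaller dyadic scale and the minimality argument above fails. The paper handles this separately (the set $\mathrm{Bad}_j^1$): in this regime badness forces $\mu_2(4T)\gtrsim\delta^{m-1-2\varepsilon}$, and since $m-1<\alpha\le m$, one projects $E_1,E_2$ to a generic $m$-dimensional subspace and applies Orponen's radial projection theorem, exactly as in \cite{du2021improved}. Finally, a smaller point: the reduction $\|d^x_*(\mu_1)-d^x_*(\mu_{1,g})\|_{L^1}\lesssim\sum_j R_j^{\beta d}\mu_1(\mathrm{Bad}_j(x))+\mathrm{RapDec}(R_0)$ is not simply the per-tube bound $\|d^x_*(M_T\mu_1)\|_{L^1}\lesssim\|M_T\mu_1\|_{L^1}$ summed over bad $T$; one also needs that the contribution of bad $T$ with $x\notin 2T$ is negligible (which is where the microlocalization and the radial direction of $T$ enter), and the $R_j^{\beta d}$ overlap factor. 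This is the content of the cited lemma from \cite{du2021improved} and should be stated, not glossed.
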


\begin{prop} \label{mainest2} Let $d\geq 3$ and
\[
\alpha > \begin{cases}\frac{d}{2}+\frac{1}{4}-\frac{1}{4(2d+1)},& d\geq 4,\\ \frac{3}{2}+\frac{1}{4}+\frac{17-12\sqrt{2}}{4},& d=3,\end{cases}
\]then there exist choices of $\varepsilon$ and $R_0$ so that for sufficiently small $\beta$ in terms of $\alpha$ in the construction of $\mu_{1,g}$ in the above,
$$	
\int_{E_2}  \| d^x_*(\mu_{1,g}) \|_{L^2}^2 d \mu_2(x) < + \infty.
$$
\end{prop}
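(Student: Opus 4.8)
The plan is to follow the single-scale reduction of \cite{guth2020falconer, du2021improved}, convert the resulting estimate into a weighted refined-decoupling problem, and then exploit the $r(T)$-dependence in the good-tube threshold \eqref{badthreshold} by applying Theorem \ref{thm-RD-lp}(c) plate-by-plate. First I would decompose $\mu_{1,g} = M_0\mu_1 + \sum_{j\ge1}g_j$ with $g_j := \sum_{T\in\mathbb{T}_j,\,T\text{ good}}M_T\mu_1$, so that $\widehat{g_j}$ is supported in $\{|\omega|\sim R_j\}$. A stationary-phase argument shows that the Fourier transform in $t$ of $d^x_*(g_j)$ is, up to a $\mathrm{RapDec}(R_j)$ error, supported in $\{|\rho|\sim R_j\}$, so the $d^x_*(g_j)$ are almost orthogonal in $L^2_t$; discarding the trivially bounded contribution of $M_0\mu_1$, it then suffices to prove, for some $\epsilon_0 = \epsilon_0(\alpha)>0$ and all large $j$,
$$
\int_{E_2}\bigl\|d^x_*(g_j)\bigr\|_{L^2}^2\,d\mu_2(x)\ \lesssim\ R_j^{-\epsilon_0},
$$
since $\sum_j R_j^{-\epsilon_0}<\infty$. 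Fix $j$ and write $R = R_j$, $\delta = 2R^{-1/2+\beta}$, and let $m$ be the integer with $m-1<\alpha\le m$; in the relevant window $\tfrac d2<\alpha<\tfrac{d+1}{2}$ one has $2\le m\le d-1$.

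Next, expanding $e^{-i\rho|x-y|}$ into spherical means, applying Plancherel in $t$, and dilating by $R$, the quantity above is dominated (exactly as in \cite[Section 3]{guth2020falconer} and \cite{du2021improved}) by a constant times $R^{-c(d,\alpha)}\|f\|_{L^2(Y;\,H\,dx)}^2$, where $f = \sum_{T\in\mathbb{W}}f_T$ is the dilated $g_j$, $\mathbb{W}$ is the dilated set of good tubes (now of dimensions $R^{1/2+\beta}\times\cdots\times R^{1/2+\beta}\times R$ with $\widehat{f_T}$ supported in a standard $R^{-1/2}$-block $\theta(T)$), $Y$ is a union of $R^{1/2}$-cubes covering the dilate of a neighborhood of $E_2$, and $H$ is a normalized multiple of the pullback of $\mu_2$; by $\alpha$-dimensionality of $\mu_2$, $H$ satisfies $\int_{Q'}H\,dx\lesssim (1/r)^\alpha$ for every $(1/r)$-cube $Q'$ and every $r\in[R^{-1/2},1]$, and $\int_Y H\,dx\sim R^\alpha$. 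After the standard dyadic pigeonholing we may assume $|f_T|$ is roughly constant and each $R^{1/2}$-cube of $Y$ meets $\sim M$ tubes of $\mathbb{W}$, where the good-tube bound \eqref{badthreshold} — which caps $\mu_2(4T)$ in terms of $r(T)$ — translates into an upper bound for $M$ that \emph{grows} as $r(T)$ shrinks.

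Now comes the main step. Group the good tubes according to the dyadic value $r(T) = r\in[C_0\delta,1]$ and, for each value, according to a heavy $(r,m)$-plate $H_\ast\in\cH_r$ containing $2T$; there are only $\delta^{-O(\eta)}$ plates in $\cH_r$, a loss we can afford since $\eta\ll\epsilon_0$. On each such group the directions $G(\theta(T))$ lie within angle $\lesssim r$ of the central $m$-plane of $H_\ast$, so the corresponding piece of $f$ has $(r,m)$-concentrated frequencies, and we may invoke Theorem \ref{thm-RD-lp}(c) — with $p = p_d$ for $d\ge4$, for which the factor $(r^2R)^{\frac{d-1}{4}-\frac{d+1}{2p}}$ equals $1$ — together with Hölder's inequality $\|f\|_{L^2(Y;Hdx)}\le\|f\|_{L^{p_d}(Y;Hdx)}\bigl(\int_Y H\bigr)^{\frac12-\frac1{p_d}}$. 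This produces the decoupling gain $r^{(d-\alpha)(\frac1{p_d}-\frac1{p_m})}$, which improves as $r$ decreases, weighed against the factor $M^{1-\frac2{p_d}}$, which worsens as $r$ decreases. Summing over the dyadic $r$ and over $\cH_r$, inserting $\int_Y H\sim R^\alpha$ and the trivial bound for $\bigl(\sum_T\|f_T\|_{L^{p_d}}^{p_d}\bigr)^{1/p_d}$ in terms of the total mass, and feeding the result back into the previous step, one obtains a net power saving $R^{-\epsilon_0}$ exactly when $\alpha > \tfrac d2+\tfrac14-\tfrac1{8d+4}$ for $d\ge4$; the binding case is $r\sim\delta$, which reproduces the threshold coming from the classical radial-projection input, while larger plates only help. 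For $d=3$ the same scheme applies, but a more careful optimization — jointly over $r$ and the exponent $p\in[p_d,p_m]$ (so that the $(r^2R)$-factor is no longer trivial) — is needed, and it produces the stated $\tfrac{17-12\sqrt2}{4}$ correction.

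The main obstacle is the bookkeeping and optimization in the last step: one must carry the exact powers of $R$, $\delta$, $r$ through the dilation so that $H$ genuinely satisfies the hypothesis of Theorem \ref{thm-RD-lp}(c) at scale $1/r$ with the right constant, correctly convert \eqref{badthreshold} into the incidence count $M$ for each value of $r(T)$, control the overlap of $\cH_r$ by a negligible $\delta^{-O(\eta)}$, and then verify that the two-parameter optimization (over $r$ and $p$, with $m=\lceil\alpha\rceil$) lands precisely on $\tfrac d2+\tfrac14-\tfrac1{8d+4}$; getting the $d=3$ numerology right is a further wrinkle. Everything else — the single-scale reduction, the spherical-mean expansion, the $L^2_t$ almost-orthogonality, and disposing of the low-frequency term $M_0\mu_1$ — is routine and parallels \cite{guth2020falconer, du2021improved}.
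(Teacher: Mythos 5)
Your proposal follows essentially the same strategy as the paper's proof of Proposition \ref{mainest2}: reduce to a single-scale estimate (the paper passes to $\int|\mu_{1,g}*\hat\sigma_R|^2\,d\mu_2$ via Liu's $L^2$-identity, while your $L^2_t$ almost-orthogonality of the $d^x_*(g_j)$ is an acceptable alternative), dyadically pigeonhole in $\lambda$, $r(T)$, a heavy plate $H_\ast\in\cH_r$, and $M$, invoke Theorem \ref{thm-RD-lp}(c) on the rescaled weight, use the good-tube threshold \eqref{badthreshold} as an incidence input, and optimize over $r$ and $p\in[p_d,p_m]$. Up to bookkeeping, that is exactly the paper's route.

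However, your description of the optimization is reversed, and this is a genuine error. You claim the binding case is $r\sim\delta$ and that ``larger plates only help,'' but the paper's \eqref{eqn: L2gamma} shows the opposite for $d\geq4$. Writing $r=R^{-\gamma}$, the exponent of $R$ is $-\gamma\bigl(\tfrac{2(d-\alpha)}{m+1}+m-2-\tfrac{2m}{p}\bigr)-\alpha\bigl(\tfrac12+\tfrac1p\bigr)$, and the coefficient $\tfrac{2(d-\alpha)}{m+1}+m-2-\tfrac{2m}{p}$ is \emph{positive} for all $p\in[p_d,p_m]$ when $d\geq4$, so the estimate is worst at $\gamma=0$, i.e.\ $r\sim 1$. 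Conceptually, the gain at $r\sim1$ comes from the new radial projection theorem (Theorem \ref{conj:threshold}), which makes the good-tube threshold $\mu_2(4T)<\delta^{\alpha-2\varepsilon}$ much tighter than the Orponen-via-orthogonal-projection threshold, while the weighted decoupling factors $r^{(d-\alpha)(1/p-1/p_m)}$ and $(r^2R)^{\frac{d-1}{4}-\frac{d+1}{2p}}$ both degenerate to $1$ there (at $p=p_d$); the role of Theorem \ref{thm-RD-lp}(c) is to ensure that the looser incidence at small $r$ never becomes worse than the $r\sim1$ case. If you actually computed the threshold from $r\sim\delta$, you would get a smaller (hence incorrect) value of $\alpha$; and for $d=3$ the binding scale is the intermediate $\gamma=\alpha/4$. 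One smaller issue: the incidence information must enter through the relation $M\mu_2(Y)\lesssim|\mathbb{W}|\,\delta^{\alpha-2\varepsilon}r^{-(\alpha-m+1)}$ of \eqref{eqn: count}, not merely through the crude $\int_YH\sim R^\alpha$, which carries no trace of the bad-tube removal.
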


We will prove these two propositions in the next two subsections. Before that, let us briefly explain the key new ideas here. In contrast to \cite{guth2020falconer,du2021improved}, where a similar framework were used to study the Falconer distance problem, our definition of good tubes here involves a new parameter $r(T)$, which captures the size of the smallest heavy plate containing tube $T$. Thanks to the new radial projection result (Theorem \ref{conj:threshold}), we are able to show that the bad tubes can be safely removed (Proposition \ref{mainest1}). To prove Proposition \ref{mainest2}, we make use of the weighted refined decoupling estimates to deal with varying values of $r(T)$.

In the extreme case that $r(T)\sim R_{j}^{-1/2}$, the threshold for the good/bad tubes is the same as the one in \cite{du2021improved}, however, in this case, from the fact that $T$ is contained in a thin $(R_{j}^{-1/2},m)$-plate, we get extra gain by applying the weighted refined decoupling estimate in Theorem \ref{thm-RD-lp}. At the other end of the spectrum, where $r(T)=1$, even though the weighted decoupling estimate is not going to help, the threshold for the good/bad tubes becomes much better than the one in \cite{du2021improved}, allowing us to obtain improvement for the Falconer problem in this case as well.

\subsection{Removal of the bad region via the radial projection theorem}

In this section, we apply the radial projection theorem (Theorem \ref{conj:threshold}) to prove Proposition \ref{mainest1} \label{sec:bad}.

From the exact same deduction as in the proof of \cite[Proposition 2.1]{du2021improved} (more precisely, Lemma 3.1), one has the bound
\[
\| d^x_*(\mu_1) - d^x_*(\mu_{1,g}) \|_{L^1}\lesssim \sum_{j\geq 1}R_j^{\beta d}\mu_1({\rm Bad}_j(x))+{\rm RapDec}(R_0),
\]where
\[
{\rm Bad_j}(x):=\bigcup_{T \in \mathbb{T}_j:\, x \in 2T \textrm{ and $T$ is bad}} 2T,\quad \forall j\geq 1.
\]We also denote
\[
\begin{split}
\text{Bad}_j:=&\{(y,x)\in E_1\times E_2:\, y\in \text{Bad}_j(x)\}\\
=&\{(y,x)\in E_1\times E_2:\, \exists \text{ bad }T\in \mathbb{T}_j \text{ s.t. }x,y\in 2T\}.
\end{split}
\]
The goal is then to obtain decay for $\mu_1({\rm Bad}_j(x))$, $\forall j\geq 1$. We have the following estimate, from which Proposition \ref{mainest1} follows.

\begin{lemma}\label{lem: bad}
There exist sufficiently large $R_0$ and sufficiently small $\beta>0$ such that there is a subset $E_2' \subset E_2$ so that $\mu_2(E_2') \ge 1 - \frac{1}{1000}$ and for each $x \in E_2'$,
\[
\mu_1({\rm Bad}_j(x))\lesssim R_j^{-100\beta d}, \quad \forall j\geq 1.\]
\end{lemma}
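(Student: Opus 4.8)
\textbf{Proof plan for Lemma \ref{lem: bad}.}
The plan is to apply Theorem \ref{conj:threshold} for each dyadic scale $r \in [C_0\delta, 1]$ at scale $\delta = 2R_j^{-1/2+\beta}$, and then sum the resulting bounds over $r$ and over $j$. First I would fix $j \ge 1$ and, for each dyadic $r$, let $A_r$ denote the set of pairs $(x,y) \in E_1 \times E_2$ lying in a common $\delta^\eta$-concentrated $(r,m)$-plate on $\mu_1 + \mu_2$ (this matches the set $A$ in Theorem \ref{conj:threshold} with the roles of $\mu_1, \mu_2$ as in that statement, so one should be slightly careful about which measure plays which role, but the hypotheses are symmetric in $\mu_1 + \mu_2$). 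Theorem \ref{conj:threshold} then produces an exceptional set $B_r \subset E_1 \times E_2$ with $(\mu_1 \times \mu_2)(B_r) \le \delta^{\gamma}$, and a pointwise bound: for every $x$ and every $\delta$-tube $T$ through $x$,
\[
\mu_2(T \setminus (A_r|_x \cup B_r|_x)) \lesssim \frac{\delta^\alpha}{r^{\alpha-(m-1)}}\delta^{-\eps}.
\]
The key observation linking this to bad tubes is that if $T \in \mathbb{T}_j$ is bad with $r(T) \sim r$, then $2T$ is contained in some $\delta^\eta$-concentrated $(r,m)$-plate on $\mu_1 + \mu_2$, so every $y \in 2T \cap E_1$ with $x \in 2T$ satisfies $(x,y) \in A_{r}$ — that is, the part of $2T$ relevant to $\mathrm{Bad}_j(x)$ is (up to the constant $C_0$ dilation, absorbed by relabelling $T$ by a $\delta$-tube $T' \supset 4T$) contained in $A_r|_x \cup B_r|_x$ except possibly... wait — actually the point is the reverse: badness of $T$ combined with $r(T) \sim r$ forces $\mu_2(4T) \ge \delta^{\alpha-2\eps} r^{-(\alpha-(m-1))}$, which by the displayed estimate is much larger than $\mu_2(4T \setminus (A_r|_x \cup B_r|_x))$ once $\eps$ is chosen smaller than the $\eps$ in the theorem (say the theorem's parameter is $\eps/2$). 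Hence a bad tube $T$ through $x$ must have $\mu_2(4T \cap (A_r|_x \cup B_r|_x)) \gtrsim \delta^{\alpha - 2\eps} r^{-(\alpha-(m-1))}$, i.e. $T$ is "captured" by $A_r$ or $B_r$.

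Next I would convert this into a measure bound on $\mathrm{Bad}_j(x)$ as follows. Split $\mathrm{Bad}_j(x) = \bigcup_{r} \mathrm{Bad}_{j,r}(x)$ according to the dyadic value $r = r(T)$; there are $O(\log R_j)$ choices of $r$. For a fixed $r$, cover $\mathrm{Bad}_{j,r}(x)$ by the $\lesssim (\text{boundedly overlapping})$ bad tubes $2T$ with $r(T) \sim r$ and $x \in 2T$; since there are $\lesssim R_j^{O(1)}$ such tubes in total but each contributes $\mu_1$-mass that I need to control, the efficient route is not to bound $\mu_1(2T)$ tube-by-tube but to integrate in $x$. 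So I would instead estimate the product measure $(\mu_1 \times \mu_2)(\mathrm{Bad}_{j,r})$: for $(y,x) \in \mathrm{Bad}_{j,r}$ we have some bad $T$ with $x, y \in 2T$, and by the paragraph above $\mu_2(4T \cap (A_r|_x \cup B_r|_x)) \gtrsim \delta^{\alpha-2\eps} r^{-(\alpha - (m-1))}$. Using that the tubes have bounded overlap, a standard Fubini/Chebyshev argument (essentially as in \cite[Lemma 3.1]{du2021improved}) gives
\[
(\mu_1 \times \mu_2)(\mathrm{Bad}_{j,r}) \lesssim \frac{(\mu_1 \times \mu_2)(A_r) + (\mu_1 \times \mu_2)(B_r)}{\delta^{\alpha - 2\eps} r^{-(\alpha-(m-1))}} \cdot (\text{overlap factor}),
\]
where $(\mu_1 \times \mu_2)(B_r) \le \delta^\gamma$ from Theorem \ref{conj:threshold}, and $(\mu_1 \times \mu_2)(A_r)$ is controlled because $A_r$ is covered by the heavy plates in $\mathcal{H}_r$ — here I would use that the number of $(r,m)$-plates in $\mathcal{E}_{r,m}$ is polynomial in $r^{-1}$ while each heavy plate $H$ has $(\mu_1+\mu_2)(H) \ge \delta^\eta$, together with the Frostman ($\alpha$-dimensional) bound on $\mu_1, \mu_2$ to bound $(\mu_1 \times \mu_2)(A_r) \lesssim \delta^{-O(\eta)} r^{\alpha - (m-1)} \cdot (\text{something})$ — the precise power is exactly what makes the ratio come out as a small positive power of $\delta$. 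After summing over the $O(\log R_j)$ dyadic $r$ and using $\delta \sim R_j^{-1/2+\beta}$, this yields $(\mu_1 \times \mu_2)(\mathrm{Bad}_j) \lesssim R_j^{-c}$ for some $c > 0$ depending on $\gamma, \eta, \eps$ (and here is where the hierarchy $\beta \ll \gamma \ll \eta \ll \eps$ is used: $\beta$ must be small enough that the $R_j^{\beta(\cdots)}$ losses are dominated). Finally, a Chebyshev argument in $x$ plus summing the geometric series $\sum_j R_j^{-c} R_j^{100\beta d}$ (which converges for $\beta$ small) lets me discard an $x$-set of $\mu_2$-measure $\le 1/1000$ and conclude $\mu_1(\mathrm{Bad}_j(x)) \lesssim R_j^{-100\beta d}$ for all $j$ on the good set $E_2'$.

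\textbf{Main obstacle.} The delicate point is the bookkeeping of the powers of $\delta$: one must check that the gain $\delta^\gamma$ from the exceptional set $B_r$, the loss $\delta^{-O(\eta)}$ from the concentration threshold in the definition of $\mathcal{H}_r$ and from the polynomial plate count in $\mathcal{E}_{r,m}$, and the loss $\delta^{-\eps}$ inside Theorem \ref{conj:threshold}, all combine — after dividing by the badness threshold $\delta^{\alpha - 2\eps} r^{-(\alpha-(m-1))}$ and summing over dyadic $r$ — to a net negative power of $\delta$, hence a net negative power of $R_j$ strong enough to beat the $R_j^{\beta d}$ and $R_j^{100 \beta d}$ factors. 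This forces the stated ordering of the small parameters and is the only genuinely non-formal step; everything else (the bounded-overlap Fubini argument, the reduction to the product measure, the final Chebyshev/summation) is routine and parallels \cite[Section 3]{du2021improved}.
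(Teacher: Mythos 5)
Your high-level outline (apply Theorem \ref{conj:threshold} at scale $\delta\sim R_j^{-1/2+\beta}$, pigeonhole over dyadic $r$, pass to the product measure $\mu_1\times\mu_2(\mathrm{Bad}_j)$, then Chebyshev and sum over $j$) matches the paper. But there is a genuine gap in the central step, and it is exactly at the ``wait --- actually the point is the reverse'' juncture: the reverse is the wrong move. Your ``capture'' observation --- that a bad tube $T$ with $r(T)\sim r$ must have $\mu_2(4T\cap(A_r|_x\cup B_r|_x))\gtrsim\delta^{\alpha-2\eps}r^{-(\alpha-(m-1))}$ --- is true but leads nowhere, because there is no useful upper bound on $(\mu_1\times\mu_2)(A_r)$. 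Each heavy plate $H\in\mathcal H_r$ only satisfies the \emph{lower} bound $(\mu_1+\mu_2)(H)\ge\delta^\eta$; the Frostman condition gives $\mu_i(H)\lesssim r^{\alpha-m}$, which is $\ge 1$ since $\alpha\le m$, so the only upper bound is the trivial $\mu_i(H)\le 1$ and hence $(\mu_1\times\mu_2)(A_r)\lesssim 1$. Dividing $1$ by the threshold $\delta^{\alpha-2\eps}r^{-(\alpha-(m-1))}$ gives something of size $\gtrsim\delta^{-\alpha}$ (for $r\sim 1$), a wildly divergent bound, not ``a small positive power of $\delta$.'' Your ``main obstacle'' paragraph acknowledges you have not verified the bookkeeping; in fact the bookkeeping fails for this route.

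The idea you discarded at the start of that paragraph is the one the paper actually uses, but with a crucial change of scale. Rather than applying Theorem \ref{conj:threshold} at scale $r=r(T)$ (so that $2T$ \emph{is} contained in a heavy $(r,m)$-plate and $A_r$ threatens to swallow the bad region), the paper applies it at the smaller scale $r=r(T)/8$, and with the roles of $x$ and $y$ swapped (fixing $y\in E_1$). Then, for a bad $T$ with $r(T)>10C_0\delta$ and $y\in 2T$, one shows $2T\cap A|_y=\varnothing$: if some $x\in 2T\cap A|_y$, then $x,y$ lie in a heavy $(r(T)/8,m)$-plate, the separation $\dist(E_1,E_2)\gtrsim 1$ forces $2T$ into a $C$-scaling of that plate, and by the covering property of $\cE_{r,m}$ one finds $2T\subset H'$ for some $\delta^\eta$-concentrated $H'\in\cE_{r(T)/2,m}$, contradicting the \emph{minimality} in the definition of $r(T)$. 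This disjointness is the whole point: the $A$-set contributes nothing, so only the small exceptional set $B$ (of product measure $\le\delta^\gamma$) and the complement $\mu_2(2T\setminus(A|_y\cup B|_y))\lesssim\delta^{\alpha-\eps}r^{-(\alpha-(m-1))}$ are left, and the latter is beaten by the badness threshold (with margin $\delta^\varepsilon$) after counting $|\mathcal T_j(y)|\lesssim R_j^{\beta d}r^{\alpha-(m-1)}\delta^{-(\alpha-2\varepsilon)}$ tubes through $y$. Finally, your outline also silently skips the boundary case $r(T)\le 10C_0\delta$, where Theorem \ref{conj:threshold} cannot be applied at scale $r(T)/8<C_0\delta$; the paper handles this regime separately by noting all such bad tubes satisfy $\mu_2(4T)\gtrsim\delta^{m-1-2\varepsilon}$ and invoking the orthogonal-projection-plus-Orponen argument of \cite{du2021improved}.
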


\begin{proof}
Fix a scale $j\geq 1$. Our goal is to show that
\begin{equation}\label{eqn: mu1mu2}
\mu_1\times \mu_2(\text{Bad}_j)\lesssim R_j^{-200\beta d}.
\end{equation}

To see how this would imply the desired estimate, one first rewrites  
\[
\mu_1\times \mu_2(\text{Bad}_j)=\int \mu_1(\text{Bad}_j(x))\,d\mu_2(x).
\]Then, one can find a set $F_j \subset E_2$ with $\mu_2 (F_j) \le R_j^{-50 \beta d}$ such that $\mu_1 (\text{Bad}_j (x)) \le R_j^{-150 \beta d}$ for $x\in E_2\setminus F_j$. We take $E_2' := E_2 \setminus \bigcup_{j \ge 1} F_j$. Observe that $\mu_2 (E_2') \ge \mu(E_2) -  \sum_{j \ge 1} R_j^{-50\beta d} > 1-R_0^{-\beta}>1-\frac{1}{1000}$ if $R_0$ is sufficiently large, hence the desired result would follow.

To prove (\ref{eqn: mu1mu2}), one recalls that $\delta=2 R_j^{-\frac{1}{2}+\beta}$. For each bad $T\in \mathbb{T}_j$, let $r(T)$ be the parameter as in the definition of bad tubes. Since there are only $\sim \log \frac 1 \delta \sim \log R_j$ many choices of $r(T)$, one can assume that $r(T)$ are the same for bad $T\in \mathbb{T}_j$.

One first decomposes 
\[
\text{Bad}_j=\text{Bad}^1_j\cup \text{Bad}^2_j,
\]where for $k=1,2$,
\[
\text{Bad}^k_j:=\{(y,x)\in E_1\times E_2:\, \exists T\in \mathcal{T}_j^k \text{ s.t. }x,y\in 2T\},
\]with
\[
\mathcal{T}_j^1:=\{T\in \mathbb{T}_j:\, T \text{ is bad},\, C_0\delta\leq r(T)\leq 10C_0\delta\},
\]
\[
\mathcal{T}_j^2:=\{T\in \mathbb{T}_j:\, T \text{ is bad},\,  r(T)> 10C_0\delta\}.
\]

The part $\text{Bad}_j^1$ is simpler and doesn't require Theorem \ref{conj:threshold}. Indeed, all the bad tubes in this case satisfies that $\mu_2(4T)\gtrsim \delta^{m-1-2\varepsilon}$. Since $m-1<\alpha\leq m$, one can follow the exact same argument in \cite[justification of (3.1)]{du2021improved} to project the sets $E_1, E_2$ onto an $m$-dimensional subspace and apply the radial projection theorem of Orponen there. We omit the details. 

Next, to estimate $\text{Bad}_j^2(y)$, define $r=\frac{r(T)}{8}$ and apply the new radial projection theorem (Theorem \ref{conj:threshold}) with the $\delta$ and $r$ specified in the above, and with the roles of $x,y$ interchanged. Note that the parameter $\eta$ is the same as the one in the definition of $r(T)$.

Then, one has a set $B\in E_1\times E_2$ and $\gamma>0$ such that $\mu_1\times \mu_2(B)\leq \delta^\gamma$ and that for each $y\in E_1$ and $\delta$-tube $\tilde{T}$ through $y$, one has
\[
\mu_2(\tilde{T}\setminus (A|_y \cup B|_y))\lesssim \frac{\delta^\alpha}{r^{\alpha-(m-1)}}\delta^{-\varepsilon}.
\]Recall that here
\[
\begin{split}
A:=\{(y,x)\in &E_1\times E_2:\,\\
&x,y \in H,\, \text{ for some $\delta^\eta$-concentrated $(r,m)$-plate $H$}\}.
\end{split}
\]

For any fixed $y\in E_1$ and $T\in \mathcal{T}^2_j(y):=\{T\in \mathcal{T}_j^2:\, y\in 2T\}$, we claim that $2T \cap A|_y=\emptyset$. Indeed, suppose there is a point $x\in A|_y$ such that $x,y\in 2T$. Then, one has that $x,y \in H$ for some $\delta^\eta$-concentrated $(r,m)$-plate $H$. Since $y\in E_1$ and $x\in E_2$ are separated, for sufficiently large constant $C_0$ (depending on ${\rm dist}(E_1, E_2)$), one has that $2T\subset 2H$. By the construction of collections $\{\mathcal{E}_{r,m}\}_r$, $2H$ is contained in some $H'\in \mathcal{E}_{4r,m}$. Obviously, $H'$ is $\delta^\eta$-concentrated, hence one concludes that $2T$ is contained in some $H'\in \mathcal{H}_{\frac{r(T)}{2}}$, which contradicts the definition of $r(T)$. (Note that the assumption $r(T)>10C_0\delta$ in this part is to guarantee that $\frac{r(T)}{2}\geq C_0 \delta$.)

By taking $\beta$ sufficiently small depending on $\gamma$, it suffices to estimate $\mu_1\times \mu_2(\text{Bad}^2_j\setminus B)$. Write 
\[
\mu_1\times \mu_2(\text{Bad}^2_j\setminus B)=\int \mu_2(\text{Bad}^2_j(y)\setminus (B|_y))\,d\mu_1(y)
\]where
\[\text{Bad}^2_j(y):=\{x\in E_2:\, (y,x)\in \text{Bad}^2_j\},\quad\forall y\in E_1.
\]Hence
\[
\text{Bad}^2_j(y)\setminus (B|_y)=\bigcup_{T\in \mathcal{T}_j^2(y)} 2T\setminus B|_y.
\]

For any $y\in E_1$ fixed, since each bad tube $T$ satisfies $\mu_2(4T)\geq \frac{\delta^{\alpha-2\varepsilon}}{r(T)^{\alpha-(m-1)}}$ and each point in $E_2$ is contained in at most $\sim R_j^{d\beta}$ many $4T$ with $y\in 2T$, one has that 
\[
|\mathcal{T}_j(y)|\frac{\delta^{\alpha-2\varepsilon}}{r(T)^{\alpha-(m-1)}}\leq \int \sum_{T\in \mathcal{T}_j(y)} \chi_{4T} \,d\mu_2\lesssim R_j^{d\beta}\int \chi_{\bigcup_{T\in \mathcal{T}_j(y)} 4T}\,d\mu_2\lesssim R_j^{\beta d},
\]hence
\[
|\mathcal{T}_j(y)|\lesssim R_j^{\beta d}\frac{r(T)^{\alpha-(m-1)}}{\delta^{\alpha-2\varepsilon}}.
\]We now have the estimate
\[
\mu_2({\rm Bad}_j^2(y)\setminus (B|_y))
\lesssim \frac{\delta^\alpha}{r^{\alpha-(m-1)}}\delta^{-\varepsilon}|\mathcal{T}_j(y)|\lesssim R_j^{\beta d}\delta^{\varepsilon}.
\]By choosing $\beta$ sufficiently small depending on $\varepsilon$, one has that the above is $\lesssim \delta^{1000\beta d}\lesssim R_j^{-200 \beta d}$. The proof is complete by integrating this estimate in $y$.

\end{proof}

\subsection{$L^2$ bound of the good part via weighted decoupling estimates}

In this section, we prove Proposition \ref{mainest2}, applying weighted refined decoupling estimates. More precisely, we will apply Theorem \ref{thm-RD-lp}(c). The argument below proceeds very similarly as in \cite{guth2020falconer,du2021improved}, except that we need to reduce to the situation where the function has concentrated frequencies in order to apply Theorem \ref{thm-RD-lp}.

For $R>0$, let $\sigma_R$ denote the normalized surface measure on the sphere of radius $R$ in $\mathbb{R}^d$. Our main estimate of the section is the following.
\begin{prop}\label{prop: l2}
Let $\alpha>0$ and $R>10 R_0$, and let $\mu_{1,g}$ be as defined in the above. Then, for all $\epsilon>0$, and $\beta, \varepsilon$ sufficiently small depending on $\alpha, \epsilon$, there holds
\begin{equation}\label{eqn: good L2}
\int |\mu_{1,g}*\hat\sigma_R(x)|^2\,d\mu_2(x)
\lesssim  R^{-\Gamma(d,\alpha)+\epsilon} R^{-(d-1)}\int |\hat\mu_1|^2\psi_R\, d\xi+{\rm RapDec}(R),
\end{equation}where
\[
\Gamma(d,\alpha)=\begin{cases} \frac{d\alpha}{d+1},& d\geq 4,\\ \alpha-\frac{\alpha^2}{6},& d=3,\end{cases}
\]and $\psi_R$ is a weight function which is $\sim 1$ on the annulus $R-1 \le |\xi| \le R+1$ and decays off of it.  To be precise, we could take
$$
\psi_R(\xi) = \left( 1 + | R- |\xi|| \right)^{-100}.
$$
\end{prop}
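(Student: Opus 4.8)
\textbf{Proof plan for Proposition \ref{prop: l2}.} The plan is to follow the by-now standard route (as in \cite{guth2020falconer, du2021improved}) of expanding $\hat\sigma_R$ via stationary phase, reducing to a refined decoupling estimate for a truncated-paraboloid extension operator at scale $R$, and then inserting the weight $\mu_2$; the novelty is that here we cannot simply apply the usual refined decoupling, and must instead split the good tubes according to $r(T)$ and apply Theorem \ref{thm-RD-lp}(c) at each scale $r$. First I would localize: writing $\mu_{1,g} = M_0\mu_1 + \sum_{T\text{ good}} M_T\mu_1$, the $M_0$ term contributes only $\mathrm{RapDec}(R)$ (its Fourier support is in $B(0,2R_0)$, away from the sphere of radius $R\gg R_0$), and only tubes $T \in \mathbb T_j$ with $R_j \sim R$ contribute non-negligibly after convolving with $\hat\sigma_R$, by the standard stationary phase expansion $\hat\sigma_R(x) \approx R^{-(d-1)/2} e^{iR|x|} a(x) + (\text{lower order})$ and the mismatch of Fourier supports otherwise. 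So I may assume $f := \mu_{1,g}$ is a single-scale object, essentially an extension operator $Eg$ for $g = \hat\mu_1$ restricted to the annulus $|\xi|\sim R$, decomposed into wave packets $f = \sum_{T\in\mathbb W} f_T$ over good tubes at scale $\delta = 2R^{-1/2+\beta}$, with the left side $\int |f * \hat\sigma_R|^2 d\mu_2$ comparable (after rescaling the sphere to unit scale and the usual parabolic rescaling) to $R^{-(d-1)} \|Eg\|_{L^2(B_R; H\,dx)}^2$ type quantity, where $H$ encodes $\mu_2$ thickened to scale $R^{1/2}$.

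Next I would set up the weight and the dyadic pigeonholing in $r(T)$. Because $\mu_2$ is an $\alpha$-dimensional measure, its $R^{1/2}$-neighborhood indicator-type weight $H$ satisfies $\int_Q H \lesssim (R^{1/2})^\alpha = R^{\alpha/2}$ on every $R^{1/2}$-cube $Q$, and more generally $\int_{Q'} H \lesssim (1/r)^\alpha$ on every $1/r$-cube; this is exactly the ball condition hypothesis of Theorem \ref{thm-RD-lp}(c). Since there are only $O(\log R)$ dyadic values of $r \in [C_0\delta, 1]$, I pigeonhole so that all good tubes $T$ in $\mathbb W$ share a common value $r = r(T)$; the Fourier supports $G(\theta(T))$ of these tubes all lie within angle $\sim r$ of the central $m$-plane of the common heavy plate, so $f$ has $(r,m)$-concentrated frequencies, and I can apply Theorem \ref{thm-RD-lp}(c) with this $r$, this $\alpha$, and $m$ the integer with $m-1 < \alpha \le m$. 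The good-tube threshold \eqref{badthreshold} says $\mu_2(4T) \lesssim \delta^{\alpha - 2\varepsilon} r^{-(\alpha - (m-1))}$ for every surviving tube, which controls the incidence count $M$ (the number of tubes meeting a given $R^{1/2}$-cube, weighted by $\mu_2$) and hence the $M^{1/2 - 1/p}$ factor. Taking $p = 2$ in Theorem \ref{thm-RD-lp}(c) (so $M^{\frac12 - \frac1p} = 1$) and combining the resulting bound $R^\epsilon r^{(d-\alpha)(\frac12 - \frac1{p_m})} (r^2 R)^{\frac{d-1}{4} - \frac{d+1}{4}}$ with the $\ell^2$-orthogonality of the $f_T$ on the right and the $\mu_2$-mass bound, I get, for each fixed $r$, a bound of the shape $R^{\epsilon}\, r^{a(d,\alpha,m)}\, R^{b(d,\alpha)} \cdot R^{-(d-1)}\int|\hat\mu_1|^2\psi_R$; summing the geometric-type series over the $O(\log R)$ dyadic values of $r \in [C_0\delta,1]$ picks out either the endpoint $r \sim \delta = R^{-1/2+\beta}$ or $r\sim 1$, and one checks that the worst exponent is $-\Gamma(d,\alpha) + O(\beta + \varepsilon)$ with $\Gamma(d,\alpha) = \frac{d\alpha}{d+1}$ for $d \ge 4$; for $d = 3$, $p_m$ changes and one gets $\Gamma(3,\alpha) = \alpha - \alpha^2/6$ instead. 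Choosing $\beta, \varepsilon$ small depending on $\alpha,\epsilon$ absorbs the errors into the $R^\epsilon$.

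I expect the main obstacle to be the bookkeeping in the last step: verifying that after the two-parameter pigeonholing (in $r(T)$ and in the incidence multiplicities $M_1, M_2$ internal to Theorem \ref{thm-RD-lp}(c)), the combination of (i) the $r$-dependent decoupling gain $r^{(d-\alpha)(\frac1p - \frac1{p_m})}(r^2R)^{\frac{d-1}{4} - \frac{d+1}{2p}}$, (ii) the $r$-dependent good-tube incidence bound coming from \eqref{badthreshold}, and (iii) the $\mu_2$-mass normalization indeed sums over dyadic $r$ to exactly the claimed exponent $\Gamma(d,\alpha)$, and that the endpoint analysis correctly reproduces the Du--Iosevich--Ou--Wang--Zhang bound at $r\sim\delta$ while the genuine improvement comes from intermediate and large $r$. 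A secondary technical point is justifying the reduction to a clean extension operator with a weight satisfying the single-scale ball condition --- i.e. controlling the rapidly decaying tails of $M_T\mu_1$ outside $2T$ and the error terms in the stationary phase expansion of $\hat\sigma_R$ --- but this is routine and identical to \cite{guth2020falconer, du2021improved}, so I would simply cite those arguments rather than reproduce them.
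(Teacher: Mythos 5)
Your outline follows the paper's general strategy (single-scale reduction, pigeonhole in $r = r(T)$, apply Theorem \ref{thm-RD-lp}(c), exploit the good-tube threshold \eqref{badthreshold}), but the plan to ``take $p=2$ in Theorem \ref{thm-RD-lp}(c)'' is a genuine error. Part (c) is stated only for $p_d \le p \le p_m$, and $p_d = \frac{2(d+1)}{d-1} > 2$ for every $d\ge 2$; the factor $(r^2R)^{\frac{d-1}{4}-\frac{d+1}{2p}}$ you write down is precisely the exponent $\gamma_d(p)$ arising from the second decoupling step in the proof of (c), which is only performed in the regime $p\ge p_d$. More importantly, taking $p=2$ destroys the mechanism by which the good-tube threshold enters. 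The paper applies Theorem \ref{thm-RD-lp}(c) for $p\in[p_d,p_m]$ to get the $L^p$ bound \eqref{eqn: L2mu} carrying the factor $(M/|\mathbb W|)^{\frac12-\frac1p}$, and then uses H\"older to return to $L^2$: $\int_Y |f|^2\,d\mu_2 \le \big(\int_Y|f|^p\,d\mu_2\big)^{2/p}\mu_2(Y)^{1-\frac2p}$. The factor $\mu_2(Y)^{1-\frac2p}$ is essential, because the good-tube threshold produces the incidence estimate $M\mu_2(Y)\lesssim |\mathbb W|\cdot \delta^{\alpha-2\varepsilon}/r^{\alpha-(m-1)}$, and it is the \emph{combination} $M^{1-\frac2p}\mu_2(Y)^{1-\frac2p}$ that this controls. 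At $p=2$ the factor trivializes to $1$ and the good-tube threshold does no work at all. The exponent $\Gamma(d,\alpha)$ is then found by writing $r=R^{-\gamma}$ and optimizing jointly in $\gamma\in[0,\frac12]$ and $p\in[p_d,p_m]$; the worst case is $\gamma=0$, $p=p_d$ when $d\ge4$, but for $d=3$ it occurs at the interior point $\gamma=\alpha/4$, so your assertion that the optimization picks out one of the endpoints $r\sim\delta$ or $r\sim1$ is also not correct in low dimension.

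There is a secondary gap in the concentration step: tubes with the same value $r(T)=r$ need not lie in a single heavy plate, so the phrase ``the common heavy plate'' is unjustified. The paper resolves this using Lemma \ref{lem:few_large_plates}, which shows that the number of $\delta^\eta$-concentrated $(r,m)$-plates is at most $\lesssim R^{N\eta/2}$, and then pigeonholes (at an acceptable $R^{O(\eta)}$ loss, $\eta$ small) to a single plate $H\in\mathcal H_r$; only after fixing this $H$ does the collection $\mathbb W_{\lambda,r,H}$ have $(r,m)$-concentrated frequencies and Theorem \ref{thm-RD-lp}(c) becomes applicable. Without this step the reduction to concentrated frequencies is not justified.
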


It is a routine argument to check that Proposition \ref{prop: l2} implies Proposition \ref{mainest2}, for instance see \cite[Proof of Proposition 2.2]{du2021improved}. We include it here for the sake of completeness.

Observe first that
$$
d^x_*(\mu_{1,g})(t)=t^{d-1}\mu_{1,g}*\sigma_t(x)\,.
$$
Since $\mu_{1,g}$ is essentially supported in the $R_0^{-1/2+\beta}$-neighborhood of $E_1$, for $x\in E_2$, we only need to consider $t\sim 1$. Hence, up to a loss of $\text{RapDec}(R_0)$ which is negligible in our argument, we have
\[
\begin{split}
\int_{E_2} \|d_*^x(\mu_{1,g})\|_{L^2}^2\,d\mu_2(x)\lesssim &\int_0^\infty \int_{E_2}|\mu_{1,g}*\sigma_t(x)|^2\,d\mu_2(x)t^{d-1}\,dt\\
\sim & \int_0^\infty \int_{E_2}|\mu_{1,g}*\hat{\sigma}_R(x)|^2\,d\mu_2(x)R^{d-1}\,dR,
\end{split}
\]
where in the second step, we have used a limiting process and an $L^2$-identity proved by Liu \cite[Theorem 1.9]{LiuL2}.

For $R\leq 10 R_0$, we use a simple estimate following from orthogonality (for example, see \cite[Lemma 5.2]{DOKZFalconer}):
\begin{equation*}
        \int_{E_2} |\mu_{1,g} * \hat{\sigma}_R (x)|^2 \, d\mu_2 (x) \lesim (R+1)^{d-1} R^{-(d-1)} \,.
    \end{equation*}
So the small $R$ contribution to $\int \norm{d_*^x (\mu_{1,g})}^2_{L^2} d\mu_2 (x)$ is
    \begin{align*}
        \int_0^{10R_0} (R+1)^{d-1} \, dR \lesim R_0^d.
    \end{align*}

Applying Proposition \ref{prop: l2} for each $R>10 R_0$ and dropping the rapidly decaying tail as we may, one can bound the large $R$ contribution to $\int \norm{d_*^x (\mu_{1,g})}^2_{L^2} d\mu_2 (x)$ by
\[
\begin{split}
&\lesssim \int_{10 R_0}^\infty \int_{\mathbb{R}^d}  R^{-\Gamma(d,\alpha)+\epsilon} \psi_R(\xi) | \hat \mu_1(\xi)|^2 \,d \xi dR  \\
&\lesssim  \int_{\mathbb{R}^d} |\xi|^{-\Gamma(d,\alpha)+\epsilon} | \hat \mu_1 (\xi)|^2 \,d \xi \sim I_{\beta} (\mu_1),
\end{split}
\]where $\beta=d-\Gamma(d,\alpha)+\epsilon$, by a Fourier representation for $I_\beta(\mu_1)$, the $\beta$-dimensional energy of $\mu_1$. One thus has $I_\beta(\mu_1)<\infty$ if $\beta<\alpha$, which is equivalent to 
\[
\alpha > \begin{cases}\frac{d}{2}+\frac{1}{4}-\frac{1}{4(2d+1)},& d\geq 4,\\ \frac{3}{2}+\frac{1}{4}+\frac{17-12\sqrt{2}}{4},& d=3.\end{cases}
\]The proof of Proposition \ref{mainest2} is thus complete.

We are now ready to prove Proposition \ref{prop: l2}. First, let's recall the following \cite[Lemma 7.5]{KevinRadialProj}, which controls the total number of essentially distinct concentrated plates from a fixed scale.

\begin{lemma}\label{lem:few_large_plates}
    Let $m-1 < \alpha \le m$. There is $N=N(\alpha, m)$ such that the following holds: let $\nu$ be an $\alpha$-dimensional measure with constant $C_\nu \ge 1$ and let $\mathcal{H}=\{H\in\cE_{r,m}: \nu(H) \ge a\}$. Then $|\mathcal{H}| \lesim (\frac{C_\nu}{a})^N$.
\end{lemma}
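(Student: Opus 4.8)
The plan is an $L^{m+1}$-moment argument whose point is that $m+1$ points lying in a common $(r,m)$-plate essentially pin down that plate unless the points are nearly degenerate, while near-degenerate $(m+1)$-tuples carry little mass precisely because $\alpha>m-1$.

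First I may assume $r\le (a/C_\nu)^{C}$ for a large $C=C(\alpha,m,d)$, since otherwise the trivial bound $|\mathcal H|\le|\cE_{r,m}|\lesim r^{-(m+1)(d-m)}$ already gives the claim. Then, using $\nu(H)\ge a$ on $\mathcal H$,
\[
a^{m+1}\,|\mathcal H|\;\le\;\sum_{H\in\cE_{r,m}}\nu(H)^{m+1}\;=\;\int_{(\R^d)^{m+1}}\#\bigl\{H\in\cE_{r,m}:x_1,\dots,x_{m+1}\in H\bigr\}\,d\nu(x_1)\cdots d\nu(x_{m+1}).
\]
The geometric input is that if $x_1,\dots,x_{m+1}$ all lie in some $(r,m)$-plate and the tuple is non-degenerate at scale $\rho$ — after reordering, $\mathrm{dist}\bigl(x_{k+1},\mathrm{aff}(x_1,\dots,x_k)\bigr)\gtrsim\rho$ for each $1\le k\le m$ — then the $m$-plane through them is pinned up to resolution $\sim r/\rho$ in each Grassmann direction, so the number of essentially distinct plates of $\cE_{r,m}$ through all of them is $\lesim(\rho/r)^{-c(d-m)}$ for an explicit $c$, and in particular $\lesim 1$ when $\rho\sim 1$ (with the crude bound $\lesim r^{-m(d-m)}$ in all cases). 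The complementary fact, and the place where $\alpha>m-1$ is genuinely used, is that the set of tuples degenerate at scale $\le\rho$ is $\nu^{\otimes(m+1)}$-small: the last point must lie in the $\rho$-neighbourhood of an $(m-1)$-plane, which has $\nu$-measure $\lesim C_\nu\rho^{\alpha-(m-1)}$ by covering it with $\rho^{-(m-1)}$ balls of radius $\rho$, and $\alpha-(m-1)>0$; the analogous bound at each lower level of the flag $\mathrm{aff}(x_1)\subset\mathrm{aff}(x_1,x_2)\subset\cdots$ controls the remaining degeneracies.

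Decomposing the integral dyadically according to the non-degeneracy scale $\rho\in[r,1]$ of the tuple, the scale-$\rho$ contribution is a product of the plate count at scale $\rho$ and a power $C_\nu^{O(1)}\rho^{c'(\alpha-(m-1))}$; the resulting sum over dyadic $\rho$ is geometric and is dominated by its endpoints, and after adding the fully degenerate piece $\rho\lesim r$ one obtains $a^{m+1}|\mathcal H|\lesim C_\nu^{O(1)}r^{-\theta}$ for some $\theta=\theta(\alpha,m,d)$, up to a harmless factor $\log(1/r)$. Feeding in the reduction to $r\le(a/C_\nu)^C$ (which also absorbs the logarithm into an arbitrarily small extra power) yields $|\mathcal H|\lesim(C_\nu/a)^N$ for some $N=N(\alpha,m,d)$, as claimed.

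The step I expect to be the main obstacle is the honest bookkeeping of the degenerate configurations: degeneracy of an $(m+1)$-tuple can occur at any level of the flag, not only at the last point, and at each level both the plate count and the available $\nu$-mass change, so the dyadic accounting has to be carried out jointly along the whole flag. The cleanest way to organize this — which I would switch to if the direct accounting becomes unwieldy — is an induction on $m$: slice $\nu$ by a generic hyperplane so that, for almost every height, the slice is an $(\alpha-1)$-dimensional measure on $\R^{d-1}$ with the Frostman constants controlled on average, and the slices of $\cE_{r,m}$-plates are $(Cr,m-1)$-plates; since an $m$-plate is determined up to bounded multiplicity by its slices at two generic heights, applying the inductive bound at two such heights and integrating gives $N(\alpha,m)\lesssim N(\alpha-1,m-1)$, with the trivial base case $m=0$ (heavy $r$-balls number $\lesim C_\nu/a$ by bounded overlap and $\nu(\R^d)=1$). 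Either way, the one genuinely delicate ingredient is quantifying the interaction between the scale $r$, the non-degeneracy scale, and the Frostman constant $C_\nu$, and that is where I would take the most care.
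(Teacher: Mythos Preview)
The paper does not actually prove this lemma; it is quoted as \cite[Lemma 7.5]{KevinRadialProj} and used as a black box. So there is no in-paper argument to compare against, and I will only comment on the soundness of your approach.

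Your $L^{m+1}$-moment argument has a genuine gap. First, the plate count you write, ``$\lesssim(\rho/r)^{-c(d-m)}$'', cannot be right: since $\rho\ge r$ this quantity is $\le 1$, yet already for $m=1$, $d=2$ two points at distance $\rho$ lie in $\sim\rho^{-1}$ tubes of $\cE_{r,1}$ (the direction is pinned only to within $r/\rho$, giving $\sim\rho^{-1}$ admissible directions in an $r$-net). In general the correct count for a tuple non-degenerate at scale $\rho$ is $\sim\rho^{-m(d-m)}$, which does \emph{not} involve $r$. Feeding this into your dyadic sum together with the mass bound $\lesssim C_\nu\,\rho^{\alpha-(m-1)}$ gives exponent $\alpha-(m-1)-m(d-m)$, and since $\alpha\le m$ one checks this is $\le 0$ for every $1\le m\le d-1$. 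Hence the sum is dominated by the endpoint $\rho\sim r$ and one only obtains
\[
a^{m+1}|\mathcal H|\;\lesssim\;C_\nu^{O(1)}\,r^{-\theta},\qquad \theta=m(d-m)-(\alpha-(m-1))>0.
\]
Your closing step, ``feeding in the reduction to $r\le(a/C_\nu)^C$'', goes the wrong way: that hypothesis yields $r^{-\theta}\ge (C_\nu/a)^{C\theta}$, a \emph{lower} bound, so nothing closes. This is not a bookkeeping issue but a real obstruction: for $\nu$ the natural $\alpha$-dimensional measure on a Cantor set inside $[0,1]\times\{0\}\subset\R^2$ with $0<\alpha<1$, a direct computation gives $\sum_{T\in\cE_{r,1}}\nu(T)^2\sim r^{\alpha-1}\to\infty$ as $r\to 0$, while the number of $a$-heavy tubes stays $\lesssim(C_\nu/a)^{O(1)}$. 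So the $(m+1)$-st moment is simply too coarse an invariant to detect the lemma, and no amount of flag bookkeeping will repair this.

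Your alternative, induction on $m$ via hyperplane slicing, is the more promising route and is closer in spirit to how such statements are handled in \cite{KevinRadialProj}; but as you yourself flag, one needs quantitative control of the Frostman constants of the slices and a quantitative ``two slices determine the plate'' statement, neither of which is supplied here.
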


\begin{proof}[Proof of Proposition \ref{prop: l2}]
Fix $\epsilon>0$ and $R>10R_0$. By definition, 
\[
\mu_{1,g}* \hat\sigma_R=\sum_{R_j \sim R} \sum_\tau \sum_{T\in \mathbb{T}_{j, \tau}:\, T \textrm{ good}} M_T \mu_1 * \hat \sigma_R+{\rm RapDec}(R).
\]Since the contribution of ${\rm RapDec}(R)$ is already taken into account in the statement of Proposition \ref{prop: l2}, we may ignore the tail ${\rm RapDec}(R)$ in the argument below. The following reduction is the same as in \cite{du2021improved}. 

Let $\eta_1$ be a bump function adapted to the unit ball and define
\[
f_T = \eta_1 \left( M_T \mu_1 * \hat \sigma_R \right).
\]It is easy to see that $f_T$ is microlocalized to $(T,\theta(T))$ (similarly as defined in Section \ref{sec: dec}, with $f$ essentially supported in $2T$ and $\hat{f}$ essentially supported in $2\theta(T)$, the block of dimension $R^{1/2}\times \cdots \times R^{1/2}\times 1$ in the partition of the $1$-neighborhood of $RS^{d-1}$ corresponding to the long direction of $T$).

We now apply a series of dyadic pigeonholing to the integral to be estimated. First, there exists $\lambda>0$ such that
\[
\int |\mu_{1,g}*\hat\sigma_R(x)|^2\,d\mu_2(x)\lesssim \log R \int | f_\lambda (x) |^2 d\mu_2(x),
\]where
\[
f_\lambda=\sum_{T\in \mathbb{W}_\lambda}f_T,\quad \mathbb{W}_\lambda:= \bigcup_{R_j\sim R}\bigcup_{\tau}\Big\{ T\in \mathbb{T}_{j,\tau}: T \text{ good }, \| f_T \|_{L^p} \sim \lambda \Big\}.
\]

For each $R$, since there are only finitely many $R_j\sim R$, for the sake of brevity we will drop the first union in $\mathbb{W}_\lambda$ and assume without loss of generality that $R_j=R$ for all $T\in \mathbb{W}_\lambda$ from this point on.

In addition, considering a sequence of dyadic scales $r\in [2C_0R^{-\frac{1}{2}+\beta},1]$, by dyadic pigeonholing, one can reduce to a subcollection $\mathbb{W}_{\lambda, r}$ for a fixed $r$, where  
\[
\mathbb{W}_{\lambda, r}:=\{T\in \mathbb{W}_\lambda:\, r(T)= r\}.
\]Here, recall that 
\[
r(T):=\min\{{\rm dyadic }\,\,r\in [2C_0 R^{-\frac{1}{2}+\beta},1]:\, \exists H\in \mathcal{H}_r \,\,{\rm s.t. }\,2T\subset H\},
\]where $\mathcal{H}_r$ denotes a collection of $(2R^{-\frac{1}{2}+\beta})^\eta$-concentrated $(r,m)$-plates of $\cE_{r,m}$.

Fixing such an $r$ from now on, one can further conclude from Lemma \ref{lem:few_large_plates} that there are at most $\sim R^{\frac{N\eta}{2}}$ different $H\in \mathcal{H}_r$. Therefore, choosing $\eta \ll \frac{1}{N}$ (depending on $\varepsilon$), one can assume that there is a fixed $(r,m)$-plate $H\in \mathcal{H}_r$ such that
\[
\int |\mu_{1,g}*\hat\sigma_R(x)|^2\,d\mu_2(x)\lessapprox \int | f_{\lambda,r,H} (x) |^2 d\mu_2(x),
\]where
\[
f_{\lambda, r,H}=\sum_{T\in \mathbb{W}_{\lambda,r,H}}f_T,\quad \mathbb{W}_{\lambda,r,H}:= \{ T\in \mathbb{W}_{\lambda,r}: 2T \subset H \}.
\]

By dyadic pigeonholing again, one is able to find $M>0$ such that the right hand side of the inequality above is further dominated by
\[
\lessapprox\int_{Y_M} | f_{\lambda,r,H} (x) |^2 d\mu_2(x),
\]where the region $Y_M=\bigcup_{q\in \mathcal{Q}_M}q$ is contained in the unit ball and
\[
\mathcal{Q}_M:=\{ R^{-1/2}\textrm{-cube } q: q \textrm{ intersects } \sim M \textrm{ tubes } T \in \mathbb{W}_{\lambda, r, H} \}.
\]

Fix $p\in [p_d, p_m]$ and write $f=f_{\lambda,r,H}$, $Y=Y_M$, and $\mathbb{W}=\mathbb{W}_{\lambda, r, H}$ for the sake of brevity. We first claim that there holds the following $L^p$ estimate:
\begin{equation}\label{eqn: L2mu}
\begin{split}&\|f\|_{L^p(Y;\,\mu_2)}\\ \lessapprox &r^{\left(\frac{1}{p}-\frac{1}{p_m}\right)(d-\alpha)}(r^2R)^{\frac{d-1}{4}-\frac{d+1}{2p}}R^{\frac{d-\alpha}{p}}\left(\frac{M}{|\mathbb{W}|}\right)^{\frac{1}{2}-\frac{1}{p}}\left(\sum_{T\in \mathbb{W}}\|f_T\|^2_{L^p}\right)^{1/2}.
\end{split}
\end{equation}

This is essentially a rescaled version of our new decoupling estimate, more precisely, Theorem \ref{thm-RD-lp}(c). We will leave the justification of it to the end of the proof and proceed now assuming it holds true.

Since $f$ only involves good wave packets, by considering the quantity
$$
\sum_{q\in \mathcal{Q}_M} \sum_{T\in \mathbb{W}: T\cap q\neq \emptyset} \mu_2(q),
$$
one obtains that
\begin{equation}\label{eqn: count}
    M \mu_2 (Y) \lesssim  | \mathbb{W} | \frac{R^{(-\frac{1}{2}+\beta)(\alpha-2\varepsilon)}}{r^{\alpha-(m-1)}}.
\end{equation}
The quantity on the right hand side above follows from the threshold defining good wave packets.

Hence, combining (\ref{eqn: L2mu}) and H\"older's inequality, one has that
\[
\begin{split}
&\int |\mu_{1,g}*\hat\sigma_R(x)|^2\,d\mu_2(x)\lessapprox \int_Y |f(x)|^2\,d\mu_2(x)\\
\leq & \left(\int_Y |f(x)|^p\,d\mu_2(x)\right)^{\frac{2}{p}}\mu_2(Y)^{1-\frac{2}{p}}\\
\lessapprox & r^{2\left(\frac{1}{p}-\frac{1}{p_m}\right)(d-\alpha)}(r^2R)^{\frac{d-1}{2}-\frac{d+1}{p}}R^{\frac{2(d-\alpha)}{p}}\left(\frac{R^{(-\frac{1}{2}+\beta)(\alpha-2\varepsilon)}}{r^{\alpha-(m-1)}} \right)^{1-\frac{2}{p}}\sum_{T\in \mathbb{W}}\|f_T\|^2_{L^p}\\
=& r^{2\left(\frac{1}{p}-\frac{1}{p_m}\right)(d-\alpha)+d-1-\frac{2(d+1)}{p}-\left(1-\frac{2}{p}\right)(\alpha-m+1)}R^{(d-1-\alpha) (\frac{1}{2}+\frac{1}{p})+O_{\alpha}(\beta)+\varepsilon}\sum_{T\in \mathbb{W}}\|f_T\|^2_{L^p}.
\end{split}
\]From standard computation (see for instance \cite[Proof of Lemma 4.1]{du2021improved}), one has the simple bound
\[
\begin{split}
\|f_T\|_{L^p}\lesssim & \|f_T\|_{L^\infty}|T|^{1/p}\lesssim \sigma_R(\theta(T))^{1/2}|T|^{1/p} \|\widehat{M_T\mu_1}\|_{L^2(d\sigma_R)}\\
= & R^{-(\frac{1}{2p}+\frac{1}{4})(d-1)+O_\alpha(\beta)}\|\widehat{M_T\mu_1}\|_{L^2(d\sigma_R)}.
\end{split}
\]Plugging this back into the above and applying orthogonality, one has that
\[
\begin{split}
&\int |\mu_{1,g}*\hat\sigma_R(x)|^2\,d\mu_2(x)\\
\lessapprox &
r^{2\left(\frac{1}{p}-\frac{1}{p_m}\right)(d-\alpha)+d-1-\frac{2(d+1)}{p}-\left(1-\frac{2}{p}\right)(\alpha-m+1)}\cdot \\
&\qquad\qquad\qquad R^{-\alpha (\frac{1}{2}+\frac{1}{p})+O_{\alpha}(\beta)+\varepsilon} \sum_{T\in \mathbb{W}} \|\widehat{M_T\mu_1}\|^2_{L^2(d\sigma_R)}\\
\lesssim &
r^{\frac{2(d-\alpha)}{m+1}+m-2-\frac{2m}{p}}R^{-\alpha (\frac{1}{2}+\frac{1}{p})+O_{\alpha}(\beta)+\varepsilon} R^{-(d-1)}\int |\hat\mu_1|^2\psi_R\, d\xi.
\end{split} 
\]
Thus, for sufficiently small $\beta, \varepsilon$ depending on $\alpha$ and $\epsilon$,
\[
\begin{split}
&\int |\mu_{1,g}*\hat\sigma_R(x)|^2\,d\mu_2(x) \\
\lesssim &
r^{\frac{2(d-\alpha)}{m+1}+m-2-\frac{2m}{p}}R^{-\alpha (\frac{1}{2}+\frac{1}{p})+\epsilon} R^{-(d-1)}\int |\hat\mu_1|^2\psi_R\, d\xi.
\end{split}
\]

Since $r$ can be anything ranging between $R^{-\frac{1}{2}+\beta}$ and $1$, one needs to maximize the right hand side of the above estimate over all possible values of $r$. To do this, we first rewrite $r=R^{-\gamma}$, where $0\leq \gamma\leq \frac{1}{2}$. Then, the estimate above becomes
\begin{equation}\label{eqn: L2gamma}
\begin{split}
&\int |\mu_{1,g}*\hat\sigma_R(x)|^2\,d\mu_2(x)\\
\lessapprox &
R^{-\gamma\left(\frac{2(d-\alpha)}{m+1}+m-2-\frac{2m}{p}\right)-\alpha (\frac{1}{2}+\frac{1}{p})+\epsilon} R^{-(d-1)}\int |\hat\mu_1|^2\psi_R\, d\xi\\
=&R^{-\gamma\left(\frac{2(d-\alpha)}{m+1}+m-2\right)-\frac{\alpha}{2}+\epsilon}R^{\frac{2m\gamma-\alpha}{p}}R^{-(d-1)}\int |\hat\mu_1|^2\psi_R\, d\xi.
\end{split}
\end{equation}

First, we look at the case $d\geq 4$. Note that one can assume without loss of generality that $\frac{d}{2}<\alpha<\frac{d}{2}+\frac{1}{3}$, hence $m=\frac{d+1}{2}$ if $d$ is odd, and $m=\frac{d}{2}+1$ if $d$ is even. A quick calculation then shows that $\frac{2(d-\alpha)}{m+1}+m-2-\frac{2m}{p}>0$ for all $p\in [p_d, p_m]$. Therefore, the estimate is maximized at $\gamma=0$, becoming
\[
\begin{split}
&\int |\mu_{1,g}*\hat\sigma_R(x)|^2\,d\mu_2(x)\\
\lessapprox & R^{-\alpha(\frac{1}{2}+\frac{1}{p})+\epsilon} R^{-(d-1)}\int |\hat\mu_1|^2\psi_R\, d\xi.
\end{split}
\]This bound can then be minimized at $p=p_d$, hence one concludes that
\begin{equation}\label{eqn: high dim}
\int |\mu_{1,g}*\hat\sigma_R(x)|^2\,d\mu_2(x)
\lessapprox  R^{-\frac{d\alpha}{d+1}+\epsilon} R^{-(d-1)}\int |\hat\mu_1|^2\psi_R\, d\xi.
\end{equation}

The computation in the case $d=3$ ($m=2$) is a bit more complicated. Observing (\ref{eqn: L2gamma}), one sees that in the case $\frac{\alpha}{4}\leq \gamma\leq \frac{1}{2}$, the optimal choice of $p$ is $p=p_m=6$, which gives
\[
\int |\mu_{1,g}*\hat\sigma_R(x)|^2\,d\mu_2(x)
\lessapprox  R^{-\gamma(\frac{4}{3}-\frac{2\alpha}{3})-\frac{2\alpha}{3}+\epsilon} R^{-2}\int |\hat\mu_1|^2\psi_R\, d\xi.
\]This bound is the worst when $\gamma=\frac{\alpha}{4}$, at which it becomes 
\begin{equation}\label{eqn: 3 dim}
\int |\mu_{1,g}*\hat\sigma_R(x)|^2\,d\mu_2(x)
\lessapprox  R^{\frac{\alpha^2}{6}-\alpha+\epsilon} R^{-2}\int |\hat\mu_1|^2\psi_R\, d\xi.
\end{equation}In the case $0\leq \gamma\leq \frac{\alpha}{4}$, (\ref{eqn: L2gamma}) is optimized at $p=p_d=4$, which is then maximized at $\gamma=\frac{\alpha}{4}$, producing the same bound as (\ref{eqn: 3 dim}).

In sum, one concludes that the desired estimate (\ref{eqn: good L2}) holds true in all dimensions $d\geq 3$. 

We are left with the justification of estimate (\ref{eqn: L2mu}). 

First, note that $f$ has Fourier support in the $1$-neighborhood of the sphere of radius $R$, one has that
\[
\|f\|_{L^p(Y;\, \mu_2)}^p\lesssim 
\|f\|_{L^p(Y;\, \mu_2\ast \eta_{\frac{1}{R}})}^p=R^{-d}\int_{RY}|f(R^{-1}y)|^p \mu_2\ast \eta_{\frac{1}{R}}(R^{-1}y)\,dy,
\]where $\eta_{\frac{1}{R}}$ is a bump function of integral $1$ that is essentially supported on the ball of radius $\frac{1}{R}$.

The above quantity can be rewritten as $$\sim R^{d-\alpha}R^{-d}\|F\|^p_{L^p(RY;\, Hdy)}\,,$$
where $F(y):=f(R^{-1}y)$ and weight function $H(y):=c_1 R^{\alpha-d}\mu_2\ast \eta_{\frac{1}{R}}(R^{-1}y)$. It is easy to see that Theorem \ref{thm-RD-lp}(c) applies. Indeed, $F$ obviously satisfies all the required conditions. From the observation that $\|\mu_2\ast \eta_{\frac{1}{R}}\|_{L^\infty}\lesssim R^{d-\alpha}$, one can choose a constant $c_1$ such that $H\leq 1$. Moreover, for any ball $Q'$ of radius $\frac{1}{r}$, the $\alpha$-dimensional condition of $\mu_2$ and the fact that $\int \eta_{\frac{1}{R}}=1$ imply that $\int_{Q'} H(y)\,dy\lesssim r^{-\alpha}$, (in fact, we have this ball condition for any radius $t>0$, but we only need this for $t=1/r$ in order to apply Theorem \ref{thm-RD-lp}(c)). 

Therefore, Theorem \ref{thm-RD-lp}(c) yields that
\[
\begin{split}
&\|f\|_{L^p(Y;\mu_2)}\\
\lessapprox &R^{\frac{d-\alpha}{p}}R^{-\frac{d}{p}} r^{(d-\alpha)(\frac 1p -\frac{1}{p_m})} (r^2 R)^{\frac{d-1}{4}-\frac{d+1}{2p}} M^{\frac 12 -\frac 1p} \left(\sum_{T\in \mathbb W}\|F_T\|_{L^p}^p\right)^{\frac 1p}\\
=&R^{\frac{d-\alpha}{p}} r^{(d-\alpha)(\frac 1p -\frac{1}{p_m})} (r^2 R)^{\frac{d-1}{4}-\frac{d+1}{2p}} M^{\frac 12 -\frac 1p} \left(\sum_{T\in \mathbb W}\|f_T\|_{L^p}^p\right)^{\frac 1p},
\end{split}
\]where $F_T(y):=f_T(R^{-1}y)$. Then, recalling that we have pigeonholed at the beginning to reduce to the case that all $\|f_T\|_{L^p}$ are roughly constant, one thus concludes that the right hand side in the above, up to a constant, coincides with the right hand side of the desired inequality (\ref{eqn: L2mu}). The proof is complete.

\end{proof}

\bibliographystyle{plain}
\bibliography{main}

\vspace{0.25cm}
	
\noindent Xiumin Du, Northwestern University, \textit{xdu@northwestern.edu}\\

\noindent Yumeng Ou, University of Pennsylvania, \textit{yumengou@sas.upenn.edu}\\

\noindent Kevin Ren, Princeton University, \textit{kevinren@princeton.edu}\\

\noindent Ruixiang Zhang, UC Berkeley, \textit{ruixiang@berkeley.edu}

\end{document}